\newtheorem{proposition}{Proposition}[section]
\newtheorem{lemma}{Lemma}[section]
\newtheorem{example}{Example}[section]
\newtheorem*{question}{Question}
\newtheorem{corollary}{Corollary}[section]
\newtheorem{remark}{Remark}[section]
\newtheorem*{acknowledgements}{Acknowledgements}
\newtheorem*{teoA}{Theorem A}
\newtheorem*{teoB}{Theorem B}
\newtheorem*{teoC}{Theorem C}
\newcommand{\R}{\mathbb{R}}
\newcommand{\s}{\mathbb{S}}
\newcommand{\B}{\mathbb{B}}
\newcommand{\be}{\beta}
\newcommand{\al}{\alpha}
\newcommand{\ria}{\rightarrow}
\newcommand{\om}{\omega}
\newcommand{\n}{\nabla}
\newcommand{\ran}{\rangle}
\newcommand{\lan}{\langle}
\newcommand{\ve}{\varepsilon}
\newcommand{\vp}{\varphi}
\DeclareMathOperator{\vol}{vol}
\DeclareMathOperator{\di}{div}
\DeclareMathOperator{\tr}{tr}
\DeclareMathOperator{\sen}{sen}
\numberwithin{equation}{section}
\title{On Stable Hypersurfaces with Vanishing Scalar Curvature}
\author{Greg\' orio Silva Neto}
\date{May, 24 2013}
\begin{document}

\maketitle

\begin{abstract}
We will prove that \emph{there are no stable complete hypersurfaces of $\R^4$ with zero scalar curvature, polynomial volume growth and such that $\dfrac{(-K)}{H^3}\geq c>0$ everywhere, for some constant $c>0$}, where $K$ denotes the Gauss-Kronecker curvature and $H$ denotes the mean curvature of the immersion. Our second result is the Bernstein type one \emph{there is no entire graphs of $\R^4$ with zero scalar curvature such that $\dfrac{(-K)}{H^3}\geq c>0$ everywhere}. 
At last, it will be proved that, if there exists a stable hypersurface with zero scalar curvature and $\dfrac{(-K)}{H^3}\geq c>0$ everywhere, that is, with volume growth greater than polynomial, then its tubular neighborhood is not embedded for suitable radius.
\end{abstract}

\section{Introduction}
\label{intro}

Let $x:M^3\ria\R^4$ be an isometric immersion. If $\lambda_1,\lambda_2,\lambda_3$ are the eigenvalues of the second fundamental form, then the \emph{scalar curvature} $R$, the non-normalized \emph{mean curvature} $H,$ and the \emph{Gauss-Kronecker curvature} $K$ are given, respectively, by 

\begin{equation}\label{eq.curv}
R=\lambda_1\lambda_2+\lambda_1\lambda_3+\lambda_2\lambda_3, \ \ H=\lambda_1+\lambda_2+\lambda_3 \ \ \mbox{and} \ K=\lambda_1\lambda_2\lambda_3.
\end{equation}

In 1959, Hartman and Nirenberg, cf. \cite{hart}, have shown that the only surfaces with zero Gaussian curvature in three-dimensional Euclidean space are planes and cylinders.

Generalizing this fact, in 1977, Cheng and Yau, cf. \cite{yau}, showed that the only complete non-compact hypersurfaces with constant scalar curvature and non-negative sectional curvature in the Euclidean space $\R^{n+1}$ are the generalized cylinders $\s^{n-p}\times\R^p.$

Let $D\subset M^3$ be a regular domain, i.e., a domain with compact closure and piecewise smooth boundary. A \emph{compact supported variation} of the immersion $x$ is a differentiable map $X:(-\varepsilon,\varepsilon)\times D\ria\R^4, \ \ve>0,$ such that, for each $t\in (-\ve,\ve),$ $X_t:D\ria\R^4,$ $X_t(p)=X(t,p)$ is an immersion, $X_0=x|_D$ and $X_t|_{\partial D}=X_0|_{\partial D}.$ We recall that hypersurfaces of $\R^4$ with zero scalar curvature are critical points of the functional
$$
\mathcal{A}_1(t) = \int_M H(t)dM_t
$$
under all variations compactly supported in $D$ (see \cite{reilly}, \cite{AdCC}, \cite{rosen}, \cite{BC}).

Following Alencar, do Carmo, and Elbert, cf. \cite{AdCE}, let us define the concept of stability for immersions with zero scalar curvature. Let $A:TM\ria TM$ the linear operator associated to the second fundamental form  of immersion $x.$ We define the \emph{first Newton transformation} $P_1:TM\ria TM$ by $P_1 = HI - A,$ where $I$ denotes identity operator. We now introduce a second order differential operator which will play a role similar to that of Laplacian in the minimal case:
\begin{equation}\label{def.L1}
L_1(f) = \di(P_1(\n f)),
\end{equation}
where $\di X$ denotes the divergence of vector field X, and $\n f$ denotes the gradient of the function $f$ in the induced metric. In \cite{HL}, Hounie and Leite showed that $L_1$ is elliptic if and only if rank $A>1.$ Thus, $K\neq0$ everywhere implies $L_1$ is elliptic, and if $H>0,$ then $P_1$ is a positive definite linear operator.

Computing the second derivative of functional $\mathcal{A}_1$ we obtain
$$
\left.\dfrac{d^2\mathcal{A}_1}{dt^2}\right|_{t=0} = -2\int_M f(L_1 f - 3Kf)dM,
$$
where $f=\left\lan\frac{dX}{dt}(0),\eta\right\ran,$ and $\eta$ is the normal vector field of the immersion.

Since $H^2=|A|^2 + 2R,$ if $R=0$ then $H^2=|A|^2,$ i.e, if $K\neq0$ everywhere, then $H^2=|A|^2 \neq0$ everywhere. It implies that $H>0$ everywhere or $H<0$ everywhere. Hence, unlike minimal case, the sign of functional $\mathcal{A}_1$ depends on choice of orientation of $M^3.$ Following Alencar, do Carmo and Elbert, see \cite{AdCE}, if we choose an orientation such that $H>0$ everywhere, then the immersion will be \emph{stable} if $\left.\dfrac{d^2\mathcal{A}_1}{dt^2}\right|_{t=0}>0$ under all compact support variations. Otherwise, i. e., if we choose an orientation such that $H<0,$ then $x$ is stable if $\left.\dfrac{d^2\mathcal{A}_1}{dt^2}\right|_{t=0}<0.$ For more details, see \cite{AdCE}.

In the pursuit of  this subject, Alencar, do Carmo and Elbert, cf. \cite{AdCE}, have posed the following:
\begin{question}\label{ques}
Is there any stable complete hypersurface $M^3$ in $\R^4$ with zero scalar curvature and everywhere non-zero Gauss-Kronecker curvature?
\end{question}
The goal of this paper is to give some partial answers to this question.
Let $B_r(p)$ be the geodesic ball with center $p\in M$ and radius $r.$ We say that a Riemannian manifold $M^3$ has \emph{polynomial volume growth}, if there exists $\alpha\in[0,4]$ such that
\begin{equation}\label{def.poli.growth}
\dfrac{\vol(B_r(p))}{r^\al}<\infty,
\end{equation}
for all $p\in M.$

A well known inequality establishes that
\begin{equation}
HK\leq \dfrac{1}{2}R^2.
\end{equation}
If $R=0$ and $K\neq0$ everywhere, then the quotient $\dfrac{K}{H^3}$ is always negative, independent on choice of orientation. Furthermore, considering $K$ and $H^3$ as functions of the eigenvalues of second fundamental form, we can see that
$$
0< \dfrac{(-K)}{H^3}\leq \dfrac{4}{27},
$$
provided $K$ and $H^3$ are homogeneous polynomials of degree $3.$ For details, see Appendix.

The first result is

\begin{teoA}
There is no stable complete hypersurface $M^3$ of $\R^4$ with zero scalar curvature, polynomial volume growth and such that $$\dfrac{(-K)}{H^3}\geq c>0$$ everywhere, for some constant $c>0.$ Here $H$ denotes the mean curvature and $K$ denotes the Gauss-Kronecker curvature of the immersion.
\end{teoA}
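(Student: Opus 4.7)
The plan is to derive from stability a global integral inequality for $(-K)$, and then use polynomial volume growth together with compactly supported test functions to force $(-K)\equiv 0$, contradicting the hypothesis. Since $R=0$ and $K\neq 0$ imply $H^2=|A|^2\neq 0$, I fix the orientation so that $H>0$ everywhere; stability then reads $-2\int_M f(L_1 f - 3Kf)\,dM\ge 0$ for every compactly supported $f$. Integrating the $L_1$ term by parts and using that $P_1$ is positive definite with eigenvalues $\mu_i=H-\lambda_i$ summing to $2H$ (so $\mu_{\max}\le\tr P_1 = 2H$) gives
\[
3\int_M (-K)\,f^2\,dM \;\le\; \int_M\lan P_1(\n f),\n f\ran\,dM \;\le\; 2\int_M H\,|\n f|^2\,dM.
\]
Inserting the pointwise bound $H\le c^{-1/3}(-K)^{1/3}$ coming from the hypothesis converts this into the effective stability inequality $3\int_M (-K)f^2\,dM \le 2c^{-1/3}\int_M (-K)^{1/3}|\n f|^2\,dM$.

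The next step is to choose the test function so as to decouple $(-K)$ from $|\n f|$. Taking $f=\vp^{a}$ with $a\ge 3/2$ and a radial cutoff $\vp$, and applying H\"older's inequality with exponents $3$ and $3/2$ by splitting the integrand as $[(-K)^{1/3}\vp^{2a/3}]\cdot[\vp^{(4a-6)/3}|\n\vp|^2]$, one rearranges to obtain
\[
\int_M (-K)\,\vp^{2a}\,dM \;\le\; C(a,c)\int_M \vp^{2a-3}|\n\vp|^3\,dM.
\]
For $\vp$ equal to $1$ on $B_r(p)$, vanishing outside $B_{2r}(p)$, with $|\n\vp|\le 2/r$, the right-hand side is bounded by $C'r^{-3}\vol(B_{2r}(p))$, and the polynomial growth hypothesis then gives $\int_{B_r(p)}(-K)\,dM\le C''\,r^{\al-3}$. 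Letting $r\to\infty$ forces $\int_M(-K)\,dM=0$, contradicting $-K\ge cH^3>0$.

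The main technical obstacle is that the cutoff estimate $r^{\al-3}$ only closes the argument for $\al<3$, whereas the statement allows $\al$ up to $4$. Sharpening the trace bound $\mu_{\max}\le 2H$ to the tight $\mu_{\max}\le (4/3)H$, which follows from the identities $\mu_i=-K/\lambda_i^2$ together with $-K/H^3\ge c$ forcing all three principal curvatures to be uniformly comparable, only improves the constant, not the $r^{-3}$ decay. To cover $\al\in[3,4]$ I would attempt a Moser-type iteration: apply the effective stability with test functions $\vp^{a}$ for a growing sequence of exponents, and combine with a Kato-style comparison between $|\n H|$ and $|\n A|$ available for $R=0$ hypersurfaces, to bootstrap the cutoff exponent from $|\n\vp|^3$ to $|\n\vp|^{3+\delta}$. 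Keeping the constants under control through the iteration, and verifying that the final inequality still couples cleanly to polynomial volume growth of degree up to $4$, is where I expect the bulk of the technical difficulty to lie.
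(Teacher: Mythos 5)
Your argument is correct as far as it goes, but it only proves the theorem for volume growth of degree $\al<3$, and the extension to $\al\in[3,4]$ --- which you correctly identify as the obstacle --- is precisely the content of the theorem and is not supplied by your sketch. Plugging $f=\vp^a$ into the stability inequality and using only the pointwise bounds $\lan P_1(\n f),\n f\ran\le 2H|\n f|^2$ and $H\le c^{-1/3}(-K)^{1/3}$ can never produce more than $|\n\vp|^3$ on the right-hand side, because the whole computation is homogeneous of degree $3$ in $A$; no choice of Hölder exponents changes that. The repair you gesture at (a Kato-type comparison between $|\n H|$ and $|\n A|$ fed into a Moser iteration) is not carried out, and as stated it is also imprecise: for $R=0$ alone the relevant refined Kato inequality is false in general, since the coefficients $\bigl(\tfrac{H-\lambda_i}{H-\lambda_j}\bigr)^2$ that control $|\n H|^2$ in terms of the off-diagonal components of $\n A$ can blow up as one principal curvature tends to $0$. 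The pinching hypothesis $\tfrac{-K}{H^3}\ge c$ is exactly what excludes this degeneration: it confines the normalized principal curvatures to a compact subset of $\{R=0\}\cap\s^2$ away from the coordinate axes, which yields a uniform constant $c_0(c)$ and the inequality $|\n A|^2-|\n H|^2\ge\tfrac{2}{1+2c_0^2}|\n H|^2$. Your proposal never uses the hypothesis $\tfrac{-K}{H^3}\ge c$ for anything beyond the trivial bound $H\le c^{-1/3}(-K)^{1/3}$, so it cannot see this mechanism.

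The paper's route to raising the cutoff exponent is also structurally different from a Moser iteration: one takes the single test function $f=H^{1+q}\psi^{(5+2q)/2}$ with a fixed small $q\ge0$, controls the resulting term $\int H^{2q}\vp^2\lan P_1(\n H),\n H\ran$ by integrating $L_1(H^{2+2q}\vp^2)$ and invoking the Simons-type identity $-L_1H=|\n H|^2-|\n A|^2-3HK$ together with the refined Kato inequality above (this is where the condition $C_1<1$ is secured), and then applies Young's inequality to trade the surviving $\int H^{3+2q}\psi^{3+2q}|\n\psi|^2$ term for a small multiple of $\int H^{5+2q}\psi^{5+2q}$ plus $\delta^{-(5+2q)/2}\int|\n\psi|^{5+2q}$. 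The exponent $5+2q\ge 5$ then beats any polynomial volume growth of degree $\al\le4$. Without the $H^{1+q}$ factor in the test function --- whose admissibility is exactly what the refined Kato inequality purchases --- the exponent stays at $3$ and the case $\al\in[3,4]$ remains open.
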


As a consequence of Theorem A, we obtain the following Bernstein type result. 

\begin{teoB}\label{teo_graph1}
There are no entire graphs $M^3$ of $\R^4$ with zero scalar curvature and such that $$\dfrac{(-K)}{H^3}\geq c>0$$ everywhere, for some constant $c>0.$ Here $H$ denotes the mean curvature and $K$ denotes the Gauss-Kronecker curvature of the immersion.
\end{teoB}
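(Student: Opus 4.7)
The strategy is to deduce Theorem B from Theorem A by verifying the two hypotheses of Theorem A that are not automatic for a graph: stability and polynomial volume growth.

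For stability, I would exploit that translations in the $e_4$-direction of $\R^4$ preserve the graph property and leave invariant the class of immersions with $R=0$. The associated normal variation has support function $\Theta = \lan N, e_4\ran$, which we may take positive by orienting $M^3$ so that $H>0$. Since the family of vertical translates of $M$ consists of $R=0$ immersions, $\Theta$ satisfies the Jacobi equation $L_1\Theta - 3K\Theta = 0$; this can also be checked directly from the standard formulas for the Newton transformation $P_1$ together with $R=0$. Ellipticity of $L_1-3K$ is ensured by $K\ne 0$, and the existence of the positive solution $\Theta$ then yields, via a Fischer-Colbrie / Allegretto-Piepenbrink type principle, that $-\int_M f(L_1 f - 3Kf)\,dM \ge 0$ for every compactly supported $f$, which is precisely stability in the sense of the excerpt.

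For polynomial volume growth, let $\pi:M^3\ria\R^3$ be the orthogonal projection onto the first three coordinates. Since $M$ is an entire graph, $\pi$ is a $1$-Lipschitz diffeomorphism, so every intrinsic ball satisfies $\pi(B_r(p)) \subseteq B_r^{\R^3}(\pi(p))$, and its intrinsic volume equals $\int_{\pi(B_r(p))}\sqrt{1+|\n f|^2}\,dx$. Once this integral is shown to be $O(r^\al)$ for some $\al\in[0,4]$, Theorem A applies and produces the desired contradiction.

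The main difficulty lies precisely in this last bound, since a priori the factor $\sqrt{1+|\n f|^2}$ can blow up. I expect the bound to come from combining $R=0$ (which makes $f$ a smooth solution of a fully nonlinear elliptic PDE, uniformly elliptic on $M$ because $K\ne 0$) with the pinching $(-K)/H^3\ge c>0$, yielding at least polynomial gradient control on $f$. An alternative that sidesteps any gradient estimate is to imitate the proof of Theorem A directly, replacing intrinsic cutoffs by extrinsic radial ones of the form $\phi(x,f(x))=\psi(|x|)$: since $\pi$ is $1$-Lipschitz one has $|\n_M\phi|\le|\psi'|$, so every integral in the stability argument becomes dominated by a Euclidean integral over an $\R^3$-ball, for which polynomial volume growth is automatic and no intrinsic assumption is needed.
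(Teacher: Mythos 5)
Your overall strategy --- reduce Theorem B to Theorem A by verifying stability and polynomial volume growth --- is exactly the paper's, and your stability argument is sound. The function $\Theta=\lan N,e_4\ran$ is nowhere zero on a graph and, by the standard formula for $L_1\lan N,v\ran$ for a constant vector $v$ together with $R\equiv 0$, satisfies $L_1\Theta=3K\Theta$; since $P_1$ is positive definite when $H>0$, the substitution $f=\Theta g$ turns $\int_M\lan P_1(\n f),\n f\ran\,dM+3\int_MKf^2\,dM$ into $\int_M\Theta^2\lan P_1(\n g),\n g\ran\,dM\ge 0$, which is the stability inequality \eqref{ineq.stab}. This is in substance a self-contained proof of the result the paper merely cites (Proposition 4.1 of Alencar--Santos--Zhou), so that half is fine.

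The genuine gap is in the volume growth step, and your proposed ``cutoff'' shortcut is not merely incomplete but incorrect as stated. Taking $\phi=\psi(|\pi(x)|)$ with $\psi$ the usual radial cutoff does give $|\n_M\phi|\le|\psi'|\le 1/r$, but the quantity you must control in \eqref{ineq1} is $\int_M|\n_M\phi|^{5+2q}\,dM$, and the measure here is the induced one, $dM=\sqrt{1+|\n f|^2}\,dx$. Bounding the integrand pointwise therefore leaves you with $r^{-(5+2q)}$ times the \emph{area of the graph} over the annulus $\{r\le |x'|\le 2r\}$, i.e.\ $\int\sqrt{1+|\n f|^2}\,dx$ over that annulus --- this is not ``a Euclidean integral over an $\R^3$-ball'' and its growth is exactly the quantity you were trying to avoid estimating (for $f(x)=Nx_1$ it already carries an arbitrarily large constant). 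Your first route, extracting polynomial gradient bounds for $f$ from the scalar curvature equation and the pinching, is stated only as an expectation and is not carried out. The paper closes this step differently: it invokes the known fact that \emph{intrinsic} balls of an entire graph $M^n\subset\R^{n+1}$ satisfy $\vol(B_r)\le Cr^{n+1}$ --- here $\vol(B_r)\le Cr^4$, which is precisely the polynomial volume growth hypothesis \eqref{def.poli.growth} with $\al=4$ --- and then reruns estimate \eqref{eq.A} with the intrinsic cutoff \eqref{test-function}. Without that lemma (or an actual gradient estimate), your reduction to Theorem A does not go through.
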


Following Nelli and Soret, cf. \cite{nelli}, in section $5$ we show that, if $M^3$ is a stable complete hypersurface of $\R^4$ with zero scalar curvature and such that $\dfrac{(-K)}{H^3}\geq c>0$ everywhere, then the tube around $M$ is not embedded for suitable radius. Precisely, we define the \emph{tube of radius} $h$ \emph{around} $M$ the set
$$
T(M,h)=\{x\in\R^4;\ \exists\ p\in M,\ x=p+ t\eta,\ t\leq h(p)\}
$$
where $\eta$ is the normal vector of second fundamental form of the immersion and $h:M\ria\R$ is an everywhere non-zero smooth function. We prove

\begin{teoC}
Let $M^3$ be a stable complete hypersurface of $\R^4$ with vanishing scalar curvature. Suppose that the second fundamental form of the immersion is bounded and there exists a constant $c>0$ such that $\dfrac{(-K)}{H^3}\geq c>0$ everywhere. Then, for constants $0<b_1\leq1, \ b_2>0,$ and for any smooth function $h:M\ria\R$ satisfying
$$
h(p)\geq\min\left\{\dfrac{b_1}{|A(p)|},b_2\rho(p)^\delta\right\}, \ p\in M,\ \delta>0,
$$
the tube $T(M,h)$ is not embedded. Here, $\rho(p)$ denotes the intrinsic distance in $M$ to a fixed point $p_0\in M.$
\end{teoC}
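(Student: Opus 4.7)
The plan is to argue by contradiction, adapting the Nelli--Soret strategy \cite{nelli} and using Theorem A as the source of contradiction: if $T(M,h)$ were embedded one could extract a polynomial volume-growth bound for $M$, which Theorem A forbids for the class of hypersurfaces under consideration. Set $\tilde h(p):=\min\{b_1/|A(p)|,b_2\rho(p)^\delta\}$; since $\tilde h\le h$ pointwise, embedding of $T(M,h)$ implies embedding of the smaller tube $T(M,\tilde h)$, and by shrinking $b_1$ slightly I may assume $0<b_1<1$ strictly.

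First I would estimate the Jacobian of the normal map $\Phi(p,t):=p+t\eta(p)$. Its Jacobian determinant equals $\det(I-tA(p))$, and for $0\le t\le b_1/|A(p)|$ every eigenvalue of $tA(p)$ lies in $[-b_1,b_1]$, giving $\det(I-tA(p))\ge (1-b_1)^3=:c_0>0$. The injectivity provided by the embedding hypothesis, combined with this positive Jacobian bound, then justifies the change-of-variables estimate
$$\vol\bigl(T(M,\tilde h)\bigr)\;\ge\;c_0\int_M \tilde h(p)\,dM.$$

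Second I would localize inside Euclidean balls centered at a fixed $p_0\in M$. Since extrinsic distance in $\R^4$ is dominated by intrinsic distance on $M$, for any $R>0$ and any $p\in B_R(p_0)$ the slab $\{p+t\eta(p):0\le t\le\tilde h(p)\}$ lies inside the Euclidean ball of radius $R':=R+\sup_{B_R(p_0)}\tilde h$ about $p_0$, and the upper bound $\tilde h(p)\le b_2\rho(p)^\delta\le b_2 R^\delta$ gives $R'\le R+b_2 R^\delta$. Using the hypothesis $|A|\le a_0$, one has $\tilde h(p)\ge b_1/a_0$ on the region $\{\rho\ge r_\ast\}$ with $r_\ast:=(b_1/(a_0 b_2))^{1/\delta}$, and therefore
$$\frac{c_0 b_1}{a_0}\bigl(\vol(B_R(p_0))-\vol(B_{r_\ast}(p_0))\bigr)\;\le\;\vol\bigl(T(M,\tilde h)\cap B^{\R^4}_{R'}(p_0)\bigr)\;\le\;\omega_4 (R')^4.$$
Because the right-hand side is polynomial in $R$, so is $\vol(B_R(p_0))$; that is, $M$ satisfies condition \eqref{def.poli.growth}. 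Theorem A applied to the stable complete $M^3$ with $(-K)/H^3\ge c>0$ then yields the desired contradiction.

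The main technical obstacle is making the area/change-of-variables formula rigorous for the (noncompact) embedded tube: injectivity of $\Phi$ on $\{(p,t):p\in M,\,0\le t\le\tilde h(p)\}$ is exactly the embedding hypothesis, and local diffeomorphism at each such $(p,t)$ is ensured by $\det(I-tA(p))\ge c_0>0$, but some care is needed to control the boundary correction $R\mapsto R'$ and to ensure that the polynomial bound survives as $R\to\infty$. Since $\delta$ is a fixed positive number and $\omega_4$ a universal constant, this is a bookkeeping matter rather than a conceptual one.
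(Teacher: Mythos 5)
Your strategy is essentially the contrapositive of the paper's own argument: both proofs reduce to the tension between the super\--polynomial intrinsic volume growth $\vol(B_r)\gtrsim r^{5+2q}$, $5+2q>4$, forced by stability (this is Lemma \ref{lemma_nelli}, itself a consequence of the Sobolev-type inequality behind Theorem A), and the $O(r^4)$ Euclidean volume available to an embedded tube of definite thickness over $B_r$. Where the paper uses the exact tube-volume formula $V(r,h)=\int h\,dM-\frac12\int h^2H\,dM-\frac14\int h^4K\,dM$ plus the estimate $K\le|A|^3/(3\sqrt3)$, you substitute the cruder Jacobian bound $\det(I-tA)\ge(1-b_1)^3$ on the subfocal range $t\le b_1/|A|$, and where the paper compares exponents directly you invoke Theorem A as a black box. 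Both substitutions are legitimate.

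There is, however, one genuine gap, located exactly at the step you wave off as bookkeeping. Your enlarged radius is $R'=R+b_2R^\delta$, because on the set where $\tilde h=b_2\rho^\delta$ the hypothesis $|A|\le a_0$ gives a \emph{lower} bound for $b_1/|A|$, not an upper one, so the only available bound on the fibre length is $b_2R^\delta$. For $\delta>1$ your comparison then yields only $\vol(B_R)\le C\,R^{4\delta}$, which does not verify condition $(\ref{def.poli.growth})$ (that definition requires an exponent $\alpha\in[0,4]$), and for $\delta$ large enough it does not even meet the weaker requirement actually used in the proof of Theorem A, namely $\vol(B_{2r})/r^{5+2q}\to0$ with $q<\sqrt{1/(1+2c_0^2)}\le1$. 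So as written the appeal to Theorem A fails for all sufficiently large $\delta$, and the statement is claimed for every $\delta>0$. The repair is precisely the paper's device: since $\tilde h\ge b_1/a_0$ outside the fixed ball $B_{r_\ast}$, pass to the constant truncation $\hat h:=\min\{\tilde h,\,b_1/a_0\}$ (the paper's $T(r,b_1a)$ with $a=\inf_M 1/|A|$). Then $T(M,\hat h)\subset T(M,h)$ is still embedded, the Jacobian bound still applies since $\hat h\le b_1/|A|$, the fibres over $B_R\setminus B_{r_\ast}$ have length exactly $b_1/a_0$, and now $R'=R+O(1)$, so the comparison gives $\vol(B_R)\le C(R+O(1))^4$ and Theorem A applies. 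With that one modification your proof closes.
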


\begin{acknowledgements} {\normalfont I would like to thank professor Hil\' ario Alencar for read critically this manuscript and for his many valuable suggestions. I would also to thank Detang Zhou by his useful suggestions, and professor Barbara Nelli for her suggestions to clarify some arguments used in the proof of Theorem C.} 
\end{acknowledgements}

\section{Preliminary Results}
\label{sec:1}

Let $B(X,Y)=\overline{\n}_XY - \n_XY$ be the second fundamental form of immersion $x,$ where $\n$ and $\overline{\n}$ are the connections of $M^3$ and $\R^4,$ respectively. The \emph{shape operator} is the only symmetric linear operator $A:TM\ria TM$ such that 
$$
B(X,Y)=\lan A(X),Y\ran\eta, \ \forall\ X,Y \in TM,
$$
where $\eta$ is the normal field of the immersion $x.$
 
Denote by $|A|^2=\tr (A^2)$ the matrix norm of second fundamental form. Since $H^2=|A|^2+2R,$ if $R=0,$ then $H^2=|A|^2.$ Hence, $K\neq0$ everywhere implies $H=|A|\neq0$ everywhere, and we can choose an orientation of $M$ such that $H>0$ everywhere.

\begin{remark}
From now on, let us fix an orientation of $M^3$ such that $H>0$ everywhere.
\end{remark}

A well known inequality establishes that
$$
HK\leq \dfrac{1}{2}R^2.
$$

Therefore, by using inequality above, $R=0$ and $H>0$ everywhere implies $K<0$ everywhere.

Define $P_1:TM\ria TM$ by $P_1=HI-A$ the \emph{first Newton transformation}. If $R=0$ and $H>0,$ then $P_1$ is positive definite. It was proved by Hounie and Leite in a general point of view, see \cite{HL}. In fact, $P_1$ positive definite implies $L_1(f) = \di(P_1(\n f))$ is an elliptic differential operator. Let us give here a proof for sake of completeness. It suffices to prove that $H-\lambda_i>0, \ i=1,2,3.$ In fact,
$$
\lambda_1^2(H-\lambda_1) = \lambda_1^2(\lambda_2+\lambda_3) = \lambda_1^2\lambda_2 + \lambda_1^2\lambda_3.
$$
Since $R=\lambda_1\lambda_2+\lambda_1\lambda_3+\lambda_2\lambda_3=0,$ we have
$$
0=\lambda_1R=\lambda_1(\lambda_1\lambda_2+\lambda_1\lambda_3+\lambda_2\lambda_3)= \lambda_1^2\lambda_2+\lambda_1^2\lambda_3+\lambda_1\lambda_2\lambda_3,
$$
i.e.,
$$
\lambda_1^2\lambda_2+\lambda_1^2\lambda_3 = - \lambda_1\lambda_2\lambda_3 = -K >0.
$$
Thus,
$$
\lambda_1^2(H-\lambda_1)=\lambda_1^2\lambda_2 + \lambda_1^2\lambda_3=- \lambda_1\lambda_2\lambda_3>0,
$$
and then, $H-\lambda_1>0.$ The other cases are analogous.

Our choice of orientation, i.e, that one such that $H>0$ everywhere, implies stability condition is equivalent to
\begin{equation}\label{ineq.stab}
-3\int_M K f^2 dM \leq \int_M\lan P_1(\n f),\n f\ran dM.
\end{equation}
The inequality $(\ref{ineq.stab})$ is known as \emph{stability inequality}.

\begin{remark} When $H<0,$ then $K>0$ and $P_1$ is negative definite. In this case, stability condition is equivalent to 
$$
3\int_M K f^2 dM \leq \int_M \lan (-P_1)(\n f),\n f\ran dM.
$$
\end{remark}

Let $\n A (X,Y,Z):=\lan \n_Z(A(X))-A(\n_ZX),Y \ran$ be the \emph{covariant derivative} of operator $A.$ The following proposition will play an important role in the proof of main theorems. In \cite{dCP}, do Carmo and Peng showed a very similar inequality for minimal hypersurfaces. 

\begin{proposition}\label{lema2}
If $R=0$ and there exists $c>0$ such that $\frac{-K}{H^3}\geq c>0$ everywhere, then there exists $c_0>0,$ depending on $c,$ such that
$$
|\n A|^2 - |\n H|^2\geq \dfrac{2}{1+2c_0^2}|\n H|^2,
$$
where $\n H$ denotes the gradient of $H.$
\end{proposition}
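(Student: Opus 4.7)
The plan is to work pointwise in a frame that diagonalizes $A$. Let $p\in M$, choose $\{e_i\}$ diagonalizing $A$ with eigenvalues $\lambda_1,\lambda_2,\lambda_3$, set $\mu_i:=H-\lambda_i$ (all positive, since $P_1$ is positive definite), and write $h_{ijk}:=\nabla A(e_i,e_j,e_k)$; by the Codazzi equation this tensor is totally symmetric. Since $R\equiv 0$, differentiating $R=\tfrac12(H^2-|A|^2)$ gives, at the diagonal point, $H H_{,k}=\sum_i\lambda_i h_{iik}$, which rearranges to
$$
\mu_1 h_{11k}+\mu_2 h_{22k}+\mu_3 h_{33k}=0,\qquad k=1,2,3.
$$
Also, $R=0$ gives $|A|^2=H^2$, so $\sum_i\mu_i=2H$ and $\sum_i\mu_i^2=H^2+|A|^2=2H^2$.

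The first step turns this relation into a sharp Cauchy--Schwarz bound for $H_{,k}$. Writing $x_k=(h_{11k},h_{22k},h_{33k})$ and $e=(1,1,1)$, the constraint says $x_k\perp\mu$, so $H_{,k}=e\cdot x_k=P_{\mu^\perp}e\cdot x_k$; and $|P_{\mu^\perp}e|^2=3-(\sum\mu_i)^2/\sum\mu_i^2=3-2=1$ by the identities just noted, giving $H_{,k}^2\le \sum_i h_{iik}^2$. Summing over $k$,
$$
|\nabla H|^2\le D+O,\qquad D:=\sum_i h_{iii}^2,\quad O:=\sum_{i\ne j}h_{iij}^2.
$$
A direct count of Codazzi-permutations of each multi-index gives $|\nabla A|^2=D+3O+6h_{123}^2\ge D+3O$, so
$$
|\nabla A|^2-|\nabla H|^2\ge 2O.
$$

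Next I would use the same relation in the reverse direction. Specialising $k$ to $i$ in the linear constraint gives $\mu_i h_{iii}=-\mu_j h_{jji}-\mu_l h_{lli}$, whence Cauchy--Schwarz yields
$$
h_{iii}^2\le \frac{\mu_j^2+\mu_l^2}{\mu_i^2}\bigl(h_{jji}^2+h_{lli}^2\bigr).
$$
Summing over $i=1,2,3$, the right-hand side collects every off-diagonal-of-diagonal term exactly once, so $D\le K_0\,O$ with $K_0:=\max_i(\mu_j^2+\mu_l^2)/\mu_i^2$. Combining,
$$
|\nabla A|^2-|\nabla H|^2\ge 2O\ge \frac{2}{1+K_0}(D+O)\ge \frac{2}{1+K_0}|\nabla H|^2,
$$
which is the stated inequality once we write $K_0=2c_0^2$.

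The remaining --- and main --- point is to show that $K_0$ is controlled by $c$ alone. Under $R=0$, $H>0$, $K<0$, two eigenvalues are positive and one negative; parametrizing $\lambda_1,\lambda_2>0$ by $s=\lambda_1+\lambda_2$ and $t=\lambda_1\lambda_2/s^2\in(0,1/4]$ we get $\lambda_3=-ts$, $\mu_1=\lambda_2^2/s$, $\mu_2=\lambda_1^2/s$, $\mu_3=s$, and a short calculation gives $-K/H^3=t^2/(1-t)^3$. Hence the pinching hypothesis $-K/H^3\ge c$ keeps $t$ above some $t_0(c)>0$, which in turn bounds $\lambda_1/\lambda_2$ and every ratio $\mu_i/\mu_j$; this yields a bound $K_0\le 2c_0(c)^2$ that holds uniformly on $M$, completing the proof.
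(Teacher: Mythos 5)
Your proof is correct and follows essentially the same route as the paper's: differentiate $R=0$ to obtain the linear relation $\sum_i(H-\lambda_i)h_{iik}=0$, use it once to bound $|\nabla H|^2$ by the diagonal block $D+O$ and once more to bound $D$ by a constant multiple of $O$, and control that constant via the pinching $-K/H^3\ge c$ together with compactness. Your execution is in places a bit sharper than the paper's (the projection argument giving $H_{,k}^2\le\sum_i h_{iik}^2$ with constant $1$, the weighted Cauchy--Schwarz constant $K_0=\max_i(\mu_j^2+\mu_l^2)/\mu_i^2$ in place of $2\max_{i,j}(\mu_i/\mu_j)^2$, and the explicit parametrization $-K/H^3=t^2/(1-t)^3$ making the compactness step quantitative), but the structure and conclusion coincide.
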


\begin{proof}
Let us fix $p\in M$ and choose $\{e_1(p),e_2(p),e_3(p)\}$ an orthonormal basis of $T_pM$ such that $h_{ij}(p)=\lambda_i(p)\delta_{ij},$ where $h_{ij}=\lan A(e_i),e_j\ran,$ $\lambda_i(p)$ denotes the eigenvalues of $A$ in $p$ and $\delta_{ij}$ is the \emph{Kronecker delta}
$$
\delta_{ij}=\left\{\begin{array}{rcl}
1&\mbox{if}&i=j;\\
0&\mbox{if}&i\neq j.
\end{array}\right.
$$

Extending this basis by parallel transport along geodesics starting on $p,$ to a referential in a neighbourhood of $p,$  we have $\n_{e_i(p)}e_j(p)=0,$ for all $i,j=1,2,3.$ This is called \emph{geodesic referential at $p$}. 

Let us denote by $h_{ij;k}=(h_{ij})_k:=e_k(h_{ij})$ the covariant derivatives of function $h_{ij},$ and by $h_{ijk}$ the components of tensor $\n A$ in the referential $\{e_1, e_2, e_3\},$ i.e., $h_{ijk}=\n A(e_i,e_j,e_k).$ Since $\{e_1,e_2,e_3\}$ is a geodesic referential, we have
$$
\begin{array}{rcl}
h_{ijk}&=&\n A(e_i,e_j,e_k)=\lan\n_{e_k}(A(e_i))-A(\n_{e_k}e_i),e_j\ran=\lan\n_{e_k}(A(e_i)),e_j\ran\\
&&\\
       &=& e_k(\lan A(e_i),e_j\ran) - \lan A(e_i),\n_{e_k}e_j\ran = e_k(\lan A(e_i),e_j\ran)=e_k(h_{ij})\\
       &&\\
       &=&h_{ij;k}.\\
\end{array}
$$
%
Since $R=0,$ then $H^2=|A|^2.$ Using this fact, we have 
\[
\begin{split}
4H^2|\n H|^2&=|\n(H^2)|^2 = |\n (|A|^2)|^2=\sum_{k=1}^3\left[\left(\sum_{i,j=1}^3 h_{ij}^2\right)_k\right]^2\\
						&=\sum_{k=1}^3\left(\sum_{i,j=1}^32h_{ij}h_{ij;k}\right)^2 				  =4\sum_{k=1}^3\left(\sum_{i=1}^3h_{ii}h_{ii;k}\right)^2.\\
\end{split}
\]
Now, using Cauchy-Schwarz inequality, we obtain
\[
\begin{split}
4\sum_{k=1}^3\left(\sum_{i=1}^3h_{ii}h_{ii;k}\right)^2& 
\leq 4\sum_{k=1}^3\left[\left(\sum_{i=1}^3h_{ii}^2\right)\left(\sum_{i=1}^3h_{ii;k}^2\right)\right]\\
&=4|A|^2\left(\sum_{i,k=1}^3h_{ii;k}^2\right) = 4H^2\left(\sum_{i,k=1}^3h_{ii;k}^2\right).\\
\end{split}
\]
Therefore,
\begin{equation}\label{ineq.lema2.1}
|\n H|^2\leq\sum_{i,k=1}^3h_{ii;k}^2.
\end{equation}
On the other hand, since $R=h_{11}h_{22}+h_{11}h_{33}+h_{22}h_{33} - h_{12}^2 - h_{13}^2 - h_{23}^2=0,$ we have for $k=1,2,3,$
\[
\begin{split}
0&=(h_{11}h_{22}+h_{11}h_{33}+h_{22}h_{33}- h_{12}^2 - h_{13}^2 - h_{23}^2)_k\\
&=h_{11k}h_{22}+h_{11}h_{22k}+h_{11k}h_{33}+h_{11}h_{33k}+h_{22k}h_{33}+h_{22}h_{33k}\\
&\qquad -2h_{12}h_{12k} - 2h_{13}h_{13k} - 2h_{23}h_{23k}\\
&=h_{11k}h_{22}+h_{11}h_{22k}+h_{11k}h_{33}+h_{11}h_{33k}+h_{22k}h_{33}+h_{22}h_{33k}\\
&=h_{11k}(h_{22}+h_{33}) + h_{22k}(h_{11}+h_{33}) + h_{33k}(h_{11}+h_{22})\\
&=h_{11k}(H-h_{11}) + h_{22k}(H-h_{22}) + h_{33k}(H-h_{33})\\
\end{split}
\]
Thus, taking $k=1$ in the inequality above, we have
\[
h_{111}=-\dfrac{1}{H-h_{11}}[h_{221}(H-h_{22}) + h_{331}(H-h_{33})]
\]
Analogously, taking $k=2$ and $k=3$ we have
\[
h_{222}=-\dfrac{1}{H-h_{22}}[h_{112}(H-h_{11}) + h_{332}(H-h_{33})]
\]
and
\[
h_{333}=-\dfrac{1}{H-h_{33}}[h_{113}(H-h_{11}) + h_{223}(H-h_{22})].
\]
Squaring and summing, we have
\[
\begin{split}
h_{111}^2+h_{222}^2+h_{333}^2&=\dfrac{1}{(H-h_{11})^2}[h_{221}(H-h_{22}) + h_{331}(H-h_{33})]^2\\
&\qquad\dfrac{1}{(H-h_{22})^2}[h_{112}(H-h_{22}) + h_{332}(H-h_{33})]^2\\
&\qquad\dfrac{1}{(H-h_{33})^2}[h_{113}(H-h_{11}) + h_{223}(H-h_{22})]^2\\
\end{split}
\]
Using inequality $(a+b)^2=a^2+2ab+b^2\leq 2(a^2+b^2),$ we have
\[
\begin{split}
h_{111}^2+h_{222}^2+h_{333}^2&\leq 2\left[\left(\dfrac{H-h_{22}}{H-h_{11}}\right)^2 h_{221}^2 + \left(\dfrac{H-h_{33}}{H-h_{11}}\right)^2 h_{331}^2 \right.\\
&\left.\qquad \left(\dfrac{H-h_{11}}{H-h_{22}}\right)^2 h_{112}^2+\left(\dfrac{H-h_{33}}{H-h_{22}}\right)^2 h_{332}^2\right.\\
&\left.\qquad\left(\dfrac{H-h_{11}}{H-h_{33}}\right)^2 h_{113}^2 + \left(\dfrac{H-h_{22}}{H-h_{33}}\right)^2 h_{223}^2 \right]\\
\end{split}
\]
Since the functions \[g_{ij}:\R^3\ria\R, \ g_{ij}(h_{11},h_{22},h_{33})=\left(\dfrac{H-h_{ii}}{H-h_{jj}}\right)^2, \ i,j=1,2,3\] are quotients of homogeneous polynomials of same degree, the values of $g_{ij}$ depends only on its value in the unit sphere $\s^2.$ Since $\{(h_{11},h_{22},h_{33})\in\R^3| R=0\}$ is closed in $\R^3,$ $\{(h_{11},h_{22},h_{33})\in\R^3|\frac{-K}{H^3}\geq c>0\}=\{(h_{11},h_{22},h_{33})\in\s^2|\frac{-K}{H^3}\geq c>0\}$ and $\s^2$ are compact sets of $\R^3,$ their intersection is a compact set of $\s^2.$ Thus all the functions $g_{ij}$ has a maximum and a minimum in $\s^2$. Let $c_0>0$ the maximum of the maxima of the functions $g_{ij}, i,j=1,2,3.$
Then
\[
h_{111}^2+h_{222}^2+h_{333}^2\leq2c_0^2\left(h_{112}^2 + h_{113}^2 + h_{221}^2 + h_{223}^2 + h_{331}^2 + h_{332}^2\right).
\]
This implies
\[
\begin{split}
|\n H|^2&\leq \sum_{i,k=1}^3 h_{iik}^2 = h_{111}^2 + h_{112}^2 + h_{113}^2+ h_{221}^2+ h_{222}^2+ h_{223}^2\\
        &\qquad + h_{331}^2+ h_{331}^2+ h_{333}^2\\
        &\leq (1+2c_0^2)\left( h_{112}^2 + h_{113}^2 + h_{221}^2+ h_{223}^2+ h_{331}^2+ h_{332}^2 \right)\\
        &\leq (1+2c_0^2)\left[\dfrac{1}{2}(h_{121}^2 + h_{211}^2) + \dfrac{1}{2}(h_{131}^2 + h_{311}^2) + \dfrac{1}{2}(h_{212}^2 + h_{122}^2) \right.\\
        &\left.\dfrac{1}{2}(h_{232}^2 + h_{322}^2) + \dfrac{1}{2}(h_{313}^2 + h_{133}^2) + \dfrac{1}{2}(h_{323}^2 + h_{223}^2)\right]\\
        &=\dfrac{1+2c_0^2}{2}\left(h_{121}^2 + h_{211}^2 + h_{131}^2 + h_{311}^2 + h_{212}^2 + h_{122}^2\right.\\
        &\left.\qquad\qquad\qquad+ h_{232}^2 + h_{322}^2 + h_{313}^2 + h_{133}^2 + h_{323}^2 + h_{233}^2\right).\\
\end{split}
\]
Therefore,
\[
\begin{split}
|\n A|^2=\sum_{i,j,k=1}^3 h_{ijk}^2&\geq\sum_{i,k=1}^3 h_{iik}^2+\sum_{i\neq k=1}^3 h_{iki}^2+\sum_{i\neq k=1}^3 h_{kii}^2\\
&\geq|\n H|^2 + \dfrac{2}{1+2c_0^2}|\n H|^2\\
&=\left(1+\dfrac{2}{1+2c_0^2}\right)|\n H|^2.\\
\end{split}
\]
\end{proof}

\section{Main Theorems}
\label{sec:2}
Hereafter, we will fix a point $p_0\in M$ and denote by $B_r$ the geodesic (intrinsic) ball of center $p_0$ and radius $r.$

The main tool to prove Theorem A stated in the Introduction is the following

\begin{proposition}\label{prop.sobolev2}
Let $x:M^3\ria\R^4$ be a stable isometric immersion with zero scalar curvature and such that $K$ is nowhere zero. Then, for all smooth function $\psi$ with compact support in $M,$ for all $\delta>0$ and $0<q<\sqrt{\frac{2}{1+2c_0^2}},$ there exists constants $\Lambda_1(q), \Lambda_2(q)>0$ such that
\begin{equation}\label{ineq1}
\int_M H^{5+2q}\left(\dfrac{(-K)}{H^3}- \Lambda_1\delta^{\frac{5+2q}{3+2q}}\right)\psi^{5+2q}dM \leq \Lambda_2\delta^{-\frac{5+2q}{2}}\int_M|\n\psi|^{5+2q}dM.
\end{equation}
\end{proposition}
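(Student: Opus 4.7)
The plan is to apply the stability inequality $3\int_M (-K)f^2\,dM \leq \int_M \lan P_1\n f, \n f\ran\,dM$ with the test function $f = H^{1+q}\psi^{(5+2q)/2}$, which is chosen so that $3\int_M (-K)f^2\,dM = 3\int_M H^{2+2q}(-K)\psi^{5+2q}\,dM$ reproduces the leading term of the sought inequality. Expanding $\n f = (1+q)H^q\psi^{(5+2q)/2}\n H + \tfrac{5+2q}{2}H^{1+q}\psi^{(3+2q)/2}\n\psi$ and grouping the three resulting contributions gives
$$ 3I \leq (1+q)^2 \tilde J_P + (1+q)(5+2q)\,X + \tfrac{(5+2q)^2}{4}\,Y, $$
with the abbreviations $I=\int(-K)H^{2+2q}\psi^{5+2q}$, $\tilde J_P=\int H^{2q}\psi^{5+2q}\lan P_1\n H,\n H\ran$, $X=\int H^{1+2q}\psi^{4+2q}\lan P_1\n H,\n\psi\ran$ and $Y=\int H^{2+2q}\psi^{3+2q}\lan P_1\n\psi,\n\psi\ran$.

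Next I would derive a Simons-type identity for $L_1$. Combining the classical Simons formula $\lap h_{ij}=H_{;ij}+H(A^2)_{ij}-|A|^2h_{ij}$ for hypersurfaces of $\R^4$ with Newton's identity $\tr(A^3)=H^3-3RH+3K$ and the hypothesis $R=0$ yields the clean formula $L_1(H)=|\n A|^2-|\n H|^2+3KH$. Applying Proposition~2.1 to the first two terms gives $L_1(H)\geq \tfrac{2}{1+2c_0^2}|\n H|^2+3KH$. Multiplying by $H^{1+2q}\psi^{5+2q}$, integrating against $M$, and integrating by parts via $L_1(\cdot)=\di(P_1\n\cdot)$ with compactly supported $\psi$ produces the companion lower bound
$$ 3I \geq (1+2q)\,\tilde J_P + \tfrac{2}{1+2c_0^2}\,J + (5+2q)\,X, \qquad J:=\int H^{1+2q}\psi^{5+2q}|\n H|^2\,dM. $$

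Subtracting this from the stability inequality eliminates $3I$ and leaves $\tfrac{2}{1+2c_0^2}J \leq q^2\,\tilde J_P + q(5+2q)\,X + \tfrac{(5+2q)^2}{4}\,Y$. I would then apply the Cauchy--Schwarz inequality associated to the positive form $P_1$, namely $|\lan P_1u,v\ran|\leq \lan P_1u,u\ran^{1/2}\lan P_1v,v\ran^{1/2}$, followed by Young's inequality with a small parameter $\ve$ on the cross term $X$, and exploit the trace identity $\tr(P_1)=2H$ (equivalently $\lan P_1v,v\ran\leq 2H|v|^2$) to compare $\tilde J_P$ with $J$. The hypothesis $q^2<\tfrac{2}{1+2c_0^2}$ is precisely what makes the coefficient of $J$ on the left strictly positive after these manipulations, allowing $J$, $\tilde J_P$ and all $|\n H|$-type contributions to be absorbed and controlled by $Y$ alone.

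Substituting the resulting absorption back into the stability inequality collapses everything to an estimate of the form $I \leq C(q)\int H^{3+2q}\psi^{3+2q}|\n\psi|^2\,dM$, once $\lan P_1\n\psi,\n\psi\ran\leq 2H|\n\psi|^2$ is used in $Y$. Finally, writing the integrand as $(H\psi)^{3+2q}|\n\psi|^2$ and applying Young's inequality with conjugate exponents $p=\tfrac{5+2q}{3+2q}$ and $p'=\tfrac{5+2q}{2}$ under the scaling $\mu=\delta^{1/p}$ yields the pointwise split
$$ (H\psi)^{3+2q}|\n\psi|^2 \leq \tfrac{\delta^{(5+2q)/(3+2q)}}{p}\,(H\psi)^{5+2q} + \tfrac{\delta^{-(5+2q)/2}}{p'}\,|\n\psi|^{5+2q}, $$
which integrates to the $\Lambda_1\delta^{(5+2q)/(3+2q)}H^{5+2q}\psi^{5+2q} + \Lambda_2\delta^{-(5+2q)/2}|\n\psi|^{5+2q}$ structure demanded by the statement. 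The hardest part will be the bootstrap closure in the third paragraph: the threshold $q^2<2/(1+2c_0^2)$ matches the Kato-type constant of Proposition~2.1 exactly, and it has to survive the Cauchy--Schwarz on $X$ together with the comparison between $\tilde J_P$ and $J$ without destroying the sign of the $J$-coefficient.
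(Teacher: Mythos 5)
Your proposal is essentially the paper's own proof, reorganized: the same test function $f=H^{1+q}\psi^{(5+2q)/2}$ (the paper writes $f=H^{1+q}\vp$ and only sets $\vp=\psi^{(5+2q)/2}$ at the end), the same formula $-L_1(H)=|\n H|^2-|\n A|^2-3HK$ from \cite{AdCC}, the same appeal to Proposition \ref{lema2}, the same integration by parts of $H^{1+2q}\vp^2L_1(H)$, the same absorption via Cauchy--Schwarz for $P_1$ and $\tr P_1=2H$, and the same final Young splitting with parameter $\delta$. The only substantive point to flag is the range of $q$: you assert that the threshold $q^2<\frac{2}{1+2c_0^2}$ survives the comparison between $\tilde J_P=\int H^{2q}\psi^{5+2q}\lan P_1\n H,\n H\ran$ and $J=\int H^{1+2q}\psi^{5+2q}|\n H|^2$, but that comparison goes through $\lan P_1 v,v\ran\leq 2H|v|^2$, i.e.\ $\tilde J_P\leq 2J$, and the factor $2$ degrades the closing condition to $q^2+q(5+2q)\ve<\frac{1}{1+2c_0^2}$; so your bootstrap closes only for $q<\sqrt{\frac{1}{1+2c_0^2}}$, not $\sqrt{\frac{2}{1+2c_0^2}}$. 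You are in good company: the paper's statement of the proposition carries the constant $\sqrt{\frac{2}{1+2c_0^2}}$, but its proof explicitly restricts to $0<q<\sqrt{\frac{1}{1+2c_0^2}}$ (and Theorem A and Corollary \ref{sobolev-SSY} use that smaller range), so the discrepancy is already present in the source; just be aware that the argument you outline, like the paper's, proves the proposition only for the smaller interval.
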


\begin{proof}
Let us choose an orientation such that $H>0$ and apply the corresponding stability inequality
\begin{equation}\label{stab.ineq.2}
3\int_M (-K)f^2 dM \leq\int_M \lan P_1(\n f), \n f\ran dM,
\end{equation}
for $f=H^{1+q}\vp,$ where $q>0,$ and $\vp$ is a smooth function compactly supported on $M$. First note that
$$
\n f = \n(H^{1+q}\vp) = (1+q)H^q\vp\n H + H^{1+q}\n\vp.
$$
It implies
$$
\begin{array}{rcl}
\lan P_1(\n f),\n f\ran &=& (1+q)^2H^{2q}\vp^2\lan P_1(\n H),\n H\ran\\
&&\\
&& + 2(1+q)H^{1+2q}\vp\lan P_1(\n H),\n \vp\ran\\
&&\\
&&+ H^{2+2q}\lan P_1(\n \vp),\n \vp\ran.\\
\end{array}
$$

Since $H>0,$ then $P_1$ is positive definite. Now, let us estimate the second term in the right hand side of identity above. By using Cauchy-Schwarz inequality followed by inequality $xy\leq\dfrac{x^2}{2} + \dfrac{y^2}{2},$ for all $x,y\in\R,$ we obtain
\begin{equation} \label{ineq.sobolev.1}
\begin{split}
 H^{1+2q}\vp\lan P_1(\n H),\n\vp\ran &=H^{2q}\lan \sqrt{\beta}\vp\sqrt{P_1}(\n H),(1/\sqrt{\beta})H\sqrt{P_1}(\n\vp)\ran\\
&\leq H^{2q}\|\sqrt{\beta}\vp\sqrt{P_1}(\n H)\| \|(1/\sqrt{\beta})H\sqrt{P_1}(\n\vp)\|\\
&\leq H^{2q}\left(\dfrac{\|\sqrt{\beta}\vp\sqrt{P_1}(\n H)\|^2}{2}+\dfrac{\|(1/\sqrt{\beta})H\sqrt{P_1}(\n\vp)\|^2}{2}\right)\\
&=\dfrac{\beta}{2}H^{2q}\vp^2\lan P_1(\n H),\n H\ran + \dfrac{1}{2\beta}H^{2+2q}\lan P_1(\n\vp),\n\vp\ran,\\ 
\end{split}
\end{equation}
for any constant $\beta>0.$ Then stability inequality (\ref{stab.ineq.2}) becomes
\begin{equation}\label{eq.b}
\begin{array}{rcl}
\displaystyle{3\int_M(-K)H^{2+2q}\vp^2dM}&\leq&\displaystyle{(1+q)^2\int_M H^{2q}\vp^2\lan P_1(\n H),\n H\ran dM}\\
&&\\
&&\displaystyle{+ 2(1+q)\int_M H^{1+2q}\vp\lan P_1(\n H),\n \vp\ran dM}\\
&&\\
&&\displaystyle{+\int_M H^{2+2q}\lan P_1(\n \vp),\n \vp\ran dM}\\
&&\\
&\leq&\displaystyle{\left((1+q)^2+(1+q)\beta\right)\int_M H^{2q}\vp^2\lan P_1(\n H),\n H\ran dM}\\
&&\\
&&\displaystyle{+\left(1+\dfrac{(1+q)}{\beta}\right)\int_MH^{2+2q}\lan P_1(\n \vp),\n\vp\ran dM.}\\
\end{array}
\end{equation}

Let us estimate $\displaystyle{\int_MH^{2q}\vp^2 \lan P_1(\n H), \n H\ran dM}.$ By using identity
$$
L_1(fg) = \di(P_1(\n(fg))) = \di (f P_1(\n g)) + g L_1 f + \lan P_1(\n f),\n g\ran,
$$
we have
$$
\begin{array}{rcl}
L_1(H^{2+2q}\vp^2) &=& \di (H P_1(\n (H^{1+2q}\vp^2))) + H^{1+2q}\vp^2L_1(H)\\
&&\\
&& + \lan P_1(\n H), \n(H^{1+2q}\vp^2)\ran\\
&&\\
&=&\di(H P_1(\n(H^{1+2q}\vp^2))) + H^{1+2q}\vp^2L_1(H)\\
&&\\
&&+(1+2q)H^{2q}\vp^2\lan P_1(\n H),\n H\ran + 2H^{1+2q}\vp\lan P_1(\n H),\n \vp\ran.\\ 
\end{array}
$$
Integrating both sides of the identity above and by using Divergence Theorem, we obtain

$$
\begin{array}{rcl}
\displaystyle{(1+2q)\int_M H^{2q}\vp^2\lan P_1(\n H), \n H\ran dM} &=&\displaystyle{ -\int_M H^{1+2q}\vp^2L_1(H)dM}\\
&&\\
&&\displaystyle{- 2\int_M H^{1+2q}\vp\lan P_1(\n H), \n \vp\ran dM.}\\
\end{array}
$$
By using inequality (\ref{ineq.sobolev.1}), we have
\[
\begin{split}
(1+2q)\int_M H^{2q}\vp^2\lan P_1(\n H), \n H\ran dM&\leq  - \int_M H^{1+2q}\vp^2 L_1(H)dM\\
&+ \beta\int_M H^{2q}\vp^2\lan P_1(\n H),\n H\ran\\
&+ \dfrac{1}{\beta}\int_MH^{2+2q}\lan P_1(\n \vp),\n \vp\ran dM,\\
\end{split}
\]
i.e.,
\[
\begin{split}
\left(1+2q-\beta\right)\int_M H^{2q}\vp^2\lan P_1(\n H),\n H\ran dM&\leq -\int_MH^{1+2q}\vp^2 L_1(H)dM\\
&+\dfrac{1}{\beta}\int_MH^{2+2q}\lan P_1(\n \vp),\n\vp\ran dM.
\end{split}
\]
On the other hand, is well known, see \cite{AdCC}, Lemma $3.7$, that
$$
- L_1(H) = |\n H|^2 - |\n A|^2 - 3HK.
$$
Since $P_1$ is positive definite, we have
$$
\lan P_1(\n H),\n H\ran \leq (\tr P_1)|\n H|^2 = 2H|\n H|^2,
$$
i.e.,
$$
|\n H|^2\geq \dfrac{1}{2H}\lan P_1(\n H),\n H\ran.
$$
By using Proposition \ref{lema2} and inequality above, we obtain
$$
-L_1(H)\leq -\dfrac{2}{1+2c_0^2}|\n H|^2 - 3HK \leq -\dfrac{1}{(1+2c_0^2)H}\lan P_1(\n H),\n H\ran - 3HK.
$$
Then
\[\begin{split}
\left(1+\frac{1}{1+2c_0^2}+2q-\beta\right)\int_M H^{2q}\vp^2\lan P_1(\n H),\n H\ran dM &\leq 3\int_M H^{2+2q}(-K)\vp^2dM\\
&+ \dfrac{1}{\beta}\int_M H^{2+2q}\vp\lan P_1(\n \vp),\n \vp\ran dM.\\
\end{split}
\]
Replacing last inequality in (\ref{eq.b}), stability inequality becomes
\[
\begin{split}
3\int_M(-K)H^{2+2q}\vp^2 dM &\leq 3C_1\int_M H^{2+2q}\vp^2(-K) dM\\
&\qquad + C_2\int_{M}H^{2+2q}\lan P_1(\n\vp),\n\vp \ran dM,\\
\end{split}
\]
i.e.,
\[
3(1-C_1)\int_M H^{2+2q}(-K)\vp^2 dM \leq C_2 \int_M H^{2+2q}\lan P_1(\n \vp),\n \vp\ran dM.\\
\]
where 
$$
C_1=\dfrac{(1+q)^2+\beta(1+q)}{1+\frac{1}{1+2c_0^2}+2q-\beta},\ C_2=1 + \dfrac{(1+q)}{\beta} + \dfrac{(1+q)^2+(1+q)\beta }{\beta\left(1+\frac{1}{1+2c_0^2}+2q-\beta\right)},
$$ 
$0<q<\sqrt{\frac{1}{1+2c_0^2}}$ by hypothesis, and $\beta$ is taken such that $0<\beta<\dfrac{\frac{1}{1+2c_0^2}-q^2}{q+2}.$ This choice of $\beta$ is necessary to have $C_1<1.$ In fact,

\[
\begin{split}
\beta<\dfrac{\frac{1}{1+2c_0^2}-q^2}{q+2}&\Rightarrow q^2+\beta q + 2\beta <\frac{1}{1+2c_0^2}\\
&\Rightarrow (1+q)^2 + \beta(1+q) < 1+ \frac{1}{1+2c_0^2}+2q-\beta \\
&\Rightarrow C_1=\dfrac{(1+q)^2 + \beta(1+q)}{1+\frac{1}{1+2c_0^2}+2q-\beta}<1.\\
\end{split}
\]
Therefore,
$$
\int_M H^{2+2q}(-K)\vp^2dM\leq \dfrac{C_2}{3(1-C_1)}\int_M H^{2+2q}\lan P_1(\n \vp),\n \vp\ran dM.
$$
On the other hand, since $P_1$ is positive definite, we have 
$$
\lan P_1(\n\vp),\n\vp\ran\leq (\tr P_1)|\n H|^2\leq 2H|\n\vp|^2.
$$
Denoting by $C_3 =\dfrac{2C_2}{3(1-C_1)},$ we have 
$$
\begin{array}{rcl}
\displaystyle{\int_M H^{2+2q}(-K)\vp^2dM} &\leq& \displaystyle{\frac{C_3}{2}\int_M H^{2+2q}\lan P_1(\n \vp),\n \vp\ran dM}\\
&&\\
&\leq&\displaystyle{C_3\int_M H^{3+2q}|\n \vp|^2dM.}\\
\end{array}
$$
Letting $\vp=\psi^p,$ where $2p=5+2q,$ we obtain
\begin{equation}\label{ineq2}
\int_M H^{2+2q}(-K) \psi^{5+2q} dM \leq C_3p^2\int_M H^{3+2q} \psi^{3+2q}|\n \psi|^2 dM. 
\end{equation}
By using Young's inequality, i.e., 
$$
xy\leq \dfrac{x^a}{a} + \dfrac{y^b}{b},\ \dfrac{1}{a}+\dfrac{1}{b}=1
$$ 
with
$$
x=\delta H^{3+2q}\psi^{3+2q},\ \ y=\dfrac{|\n\psi|^2}{\delta}, \ \ a=\dfrac{5+2q}{3+2q}, \ \ b=\dfrac{5+2q}{2}, \ \ \mbox{and} \ \ \delta>0,
$$
we have
$$
H^{3+2q}\psi^{3+2q}|\n\psi|^2 \leq \dfrac{3+2q}{5+2q}\delta^{\frac{5+2q}{3+2q}}H^{5+2q}\psi^{5+2q} + \dfrac{2}{5+2q}\delta^{-\frac{5+2q}{2}}|\n\psi|^{5+2q}.
$$
Replacing last inequality in inequality (\ref{ineq2}), we obtain
$$
\begin{array}{rcl}
\displaystyle{\int_M H^{2+2q}(-K) \psi^{5+2q} dM} &\leq&\displaystyle{ \dfrac{3+2q}{5+2q}p^2C_3\delta^{\frac{5+2q}{3+2q}}\int_M H^{5+2q}\psi^{5+2q} dM}\\
&&\\
&&\displaystyle{ + \dfrac{2}{5+2q}p^2C_3\delta^{-\frac{5+2q}{2}}\int_M|\n\psi|^{5+2q}dM,}\\
\end{array}
$$
i.e.,
\begin{equation}\label{eq.e}
\int_M H^{5+2q}\left(\frac{(-K)}{H^3}-\Lambda_1\delta^{\frac{5+2q}{3+2q}}\right)\psi^{5+2q}dM\leq \Lambda_2\delta^{-\frac{5+2q}{2}}\int_M |\n\psi|^{5+2q}dM,
\end{equation}
where $\Lambda_1 = \dfrac{3+2q}{5+2q}p^2C_3$ and $\Lambda_2=\dfrac{2p^2}{5+2q}C_3.$
\end{proof}

\begin{remark}
In \cite{SSY}, Schoen, Simon, and Yau obtained the following Sobolev type inequality for minimal hypersurfaces $M^n$ immersed in $\R^{n+1}$:
\begin{equation}\label{ineq.sobolev.SSY}
\int_M |A|^{2p}\psi^{2p} dM \leq C(n,p)\int_M |\n \phi|^{2p} dM, 
\end{equation}
for $p\in[2,2+\sqrt{2/n}),$ and for all function $\psi:M\ria\R$ compactly supported on $M.$ By using inequality of Proposition \ref{prop.sobolev2}, we obtain a similar result for hypersurfaces $M^3$ immersed in $\R^4$ with zero scalar curvature. In fact, if $R=0,$ then $H^2=|A|^2.$ Choosing an orientation such that $H>0,$ we have $H=|A|.$ In this case, we have

\begin{corollary}[Sobolev type inequality]\label{sobolev-SSY}
Let $x:M^3\ria\R^4$ be a stable isometric immersion with zero scalar curvature and such that $\dfrac{(-K)}{H^3}\geq c>0$ everywhere. Then, for all smooth function $\psi$ with compact support in $M,$ for all $\delta>0$ and $p\in\left(\frac{5}{2},\frac{5}{2}+\sqrt{\frac{1}{1+2c_0^2}}\right),$ there exists a constant $C(p)>0$ such that
\begin{equation}\label{ineq.sobolev}
\int_M |A|^{2p}\psi^{2p} dM \leq C(p)\int_M |\n \psi|^{2p}dM.
\end{equation}
\end{corollary}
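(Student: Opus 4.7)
The plan is to deduce the corollary directly from Proposition \ref{prop.sobolev2} by making a smart choice of the parameter $\delta$ and using the hypothesis $(-K)/H^3\geq c>0$ pointwise on $M$.

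First I would translate the exponents. Setting $q=p-5/2$, the range $p\in\left(\frac{5}{2},\frac{5}{2}+\sqrt{\frac{1}{1+2c_0^2}}\right)$ corresponds precisely to $q\in\left(0,\sqrt{\frac{1}{1+2c_0^2}}\right)$, which is the admissible range in the proof of Proposition \ref{prop.sobolev2} (where the condition $C_1<1$ forces $\beta$ to be chosen in $(0,(\frac{1}{1+2c_0^2}-q^2)/(q+2))$). With this substitution, $2p=5+2q$, so $H^{5+2q}=H^{2p}$ and $|\n\psi|^{5+2q}=|\n\psi|^{2p}$.

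Next I would plug in the assumption. Since $R=0$ and we have fixed an orientation with $H>0$, the identity $H^2=|A|^2$ yields $H=|A|$; in particular $H^{2p}=|A|^{2p}$. Using $(-K)/H^3\geq c>0$ everywhere, the bracketed factor on the left of $(\ref{ineq1})$ satisfies
\[
\frac{-K}{H^3}-\Lambda_1\delta^{\frac{5+2q}{3+2q}}\;\geq\;c-\Lambda_1\delta^{\frac{5+2q}{3+2q}}.
\]
Now I would absorb the $\delta$-term into the positive constant: choose $\delta=\delta(p,c)>0$ small enough so that
\[
\Lambda_1\,\delta^{\frac{5+2q}{3+2q}}\;\leq\;\frac{c}{2},
\qquad\text{i.e.}\qquad \delta\;\leq\;\left(\frac{c}{2\Lambda_1}\right)^{\!\frac{3+2q}{5+2q}}.
\]
Such a $\delta$ exists and is positive because $\Lambda_1,c>0$. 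With this choice, Proposition \ref{prop.sobolev2} gives
\[
\frac{c}{2}\int_M H^{5+2q}\psi^{5+2q}\,dM\;\leq\;\Lambda_2\,\delta^{-\frac{5+2q}{2}}\int_M |\n\psi|^{5+2q}\,dM.
\]

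Finally, substituting $H=|A|$ and $2p=5+2q$, and defining
\[
C(p)\;:=\;\frac{2\Lambda_2(q)}{c}\,\delta^{-\frac{5+2q}{2}},
\]
I obtain exactly the desired inequality $(\ref{ineq.sobolev})$. There is no genuine obstacle here: the work was already done in Proposition \ref{prop.sobolev2}; the corollary is just the observation that, under the additional pointwise lower bound on $(-K)/H^3$, one may absorb the first term on the left and rewrite $H$ as $|A|$. The only mild point to check is that the admissible range of $p$ in the corollary is consistent with the admissible range of $q$ in the proposition, which I verified in the first step.
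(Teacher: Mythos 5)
Your proposal is correct and follows exactly the route the paper intends: the paper states this corollary as an immediate consequence of Proposition \ref{prop.sobolev2} together with $H=|A|$ (from $R=0$ and the choice of orientation), and your choice of $\delta$ with $\Lambda_1\delta^{\frac{5+2q}{3+2q}}\leq c/2$ to absorb the error term, plus the substitution $q=p-\tfrac{5}{2}$, is precisely the omitted verification. The only cosmetic discrepancy is that the corollary's phrase ``for all $\delta>0$'' is vestigial, since, as you note, $\delta$ must in fact be fixed small in terms of $p$ and $c$ and is then absorbed into $C(p)$.
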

\end{remark}

\begin{remark}
In the recent article \cite{ilias-nelli-soret}, Ilias, Nelli, and Soret, obtained results in this direction for hypersurfaces with constant mean curvature.
\end{remark}

Now let us prove the Theorem A stated in the Introduction.

\begin{teoA}
There is no stable complete hypersurface $M^3$ of $\R^4$ with zero scalar curvature, polynomial volume growth and such that $$\dfrac{(-K)}{H^3}\geq c>0$$ everywhere, for some constant $c>0.$
\end{teoA}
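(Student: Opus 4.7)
The plan is to extract a Caccioppoli-type inequality from Proposition \ref{prop.sobolev2} by absorbing the first term on the left into the lower bound $(-K)/H^3 \geq c$, and then to test against a standard radial cutoff so that polynomial volume growth forces the right-hand side to zero as the radius tends to infinity.

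Concretely, fix any $q$ with $0<q<\sqrt{\frac{1}{1+2c_0^2}}$ (so the proposition applies with constants $\Lambda_1(q),\Lambda_2(q)>0$), and choose $\delta>0$ so small that $\Lambda_1\delta^{(5+2q)/(3+2q)}\leq c/2$. Then the factor $\frac{(-K)}{H^3}-\Lambda_1\delta^{(5+2q)/(3+2q)}$ is at least $c/2$ pointwise, so Proposition \ref{prop.sobolev2} reduces to
$$
\frac{c}{2}\int_M H^{5+2q}\psi^{5+2q}\,dM \;\leq\; \Lambda_2\delta^{-(5+2q)/2}\int_M|\n\psi|^{5+2q}\,dM
$$
for every smooth compactly supported $\psi$ on $M$.

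Now fix a base point $p_0\in M$ and, for each $r>0$, let $\psi=\psi_r$ be a standard cutoff with $\psi_r\equiv 1$ on $B_r=B_r(p_0)$, $\supp\psi_r\subset B_{2r}$, and $|\n\psi_r|\leq 2/r$. Since $M$ is complete, such a Lipschitz cutoff exists and can be smoothed. Substituting,
$$
\frac{c}{2}\int_{B_r} H^{5+2q}\,dM \;\leq\; \Lambda_2\delta^{-(5+2q)/2}\Big(\tfrac{2}{r}\Big)^{5+2q}\vol(B_{2r}).
$$
By the polynomial volume growth hypothesis, $\vol(B_{2r})\leq C r^{\alpha}$ with $\alpha\in[0,4]$, so the right-hand side is bounded above by a constant multiple of $r^{\alpha-(5+2q)}$. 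Since $\alpha\leq 4<5<5+2q$, this tends to $0$ as $r\to\infty$, giving $\int_M H^{5+2q}\,dM=0$. Because the chosen orientation gives $H>0$ everywhere, this is the desired contradiction.

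I do not expect a major obstacle: essentially all the analytic work is already encoded in Proposition \ref{prop.sobolev2}, and the step that could be delicate, namely comparing the volume-growth exponent $\alpha$ with the integrability exponent $5+2q$, is painless here since $\alpha\leq 4$ leaves the full range $q>0$ available. The only point requiring some care is to verify that a smooth cutoff with the claimed gradient bound exists on a complete Riemannian manifold; this is standard and can be obtained by smoothing $\psi_r(x)=\eta(\rho(x)/r)$ for $\eta\in C^\infty(\R)$ with $\eta\equiv 1$ on $(-\infty,1]$, $\eta\equiv 0$ on $[2,\infty)$, using the distance function $\rho(x)=\dist(x,p_0)$.
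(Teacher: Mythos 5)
Your proposal is correct and follows essentially the same route as the paper: apply Proposition \ref{prop.sobolev2} with a radial cutoff supported in $B_{2r}$, choose $\delta$ small enough that the factor $\frac{(-K)}{H^3}-\Lambda_1\delta^{\frac{5+2q}{3+2q}}$ is bounded below by a positive constant, and let $r\to\infty$ using polynomial volume growth (with $\alpha\leq 4<5+2q$) to force $H\equiv 0$, a contradiction. The only differences are cosmetic (your explicit $c/2$ lower bound and the precise form of the cutoff, versus the paper's piecewise-linear $\psi=\frac{2r-\rho}{r}$).
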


\begin{proof}
Suppose by contradiction there exists a complete stable hypersurface attending conditions of Theorem A. Then we can apply Proposition \ref{prop.sobolev2}. Choose the compact supported function $\psi:M\ria\R$ defined by
\begin{equation}
\label{test-function}
\psi(\rho(p))=\left\{
\begin{array}{ccl}
1&\mbox{if}& p\in B_r;\\
&&\\
\dfrac{2r-\rho(p)}{r}&\mbox{if}& p\in B_{2r}\backslash B_r;\\
&&\\
0&\mbox{if}&p\in M\backslash B_{2r},\\
\end{array}\right.
\end{equation}
where $\rho(p)=\rho(p,p_0)$ is the distance function of $M.$ By using this function $\psi$ in the inequality of Proposition \ref{prop.sobolev2}, we have
\begin{equation}\label{eq.A}
\begin{split}
\displaystyle{\int_{B_r} H^{5+2q}\left(\frac{(-K)}{H^3}-\Lambda_1\delta^{\frac{5+2q}{3+2q}}\right)dM}&\leq \displaystyle{\int_{B_{2r}} H^{5+2q}\left(\frac{(-K)}{H^3}-\Lambda_1\delta^{\frac{5+2q}{3+2q}}\right)\psi^{5+2q}dM}\\
&\leq \displaystyle{\Lambda_2\delta^{-\frac{5+2q}{2}}\int_{B_{2r}} |\n\psi|^{5+2q}dM}\\
&\leq \displaystyle{\Lambda_2\delta^{-\frac{5+2q}{2}}\dfrac{\vol B_{2r}}{r^{5+2q}},}\\
\end{split}
\end{equation}
for $0<q<\sqrt{\frac{1}{1+2c_0^2}}.$
Taking $\delta>0$ sufficiently small and since, by hypothesis, $\dfrac{(-K)}{H^3}\geq c>0,$ we get
$$
\left(\dfrac{(-K)}{H^3}-\Lambda_1\delta^{\frac{5+2q}{3+2q}}\right)>0.
$$
By hypothesis, $M$ has polynomial volume growth. It implies that
$$
\lim_{r\ria\infty}\dfrac{\vol(B_r)}{r^\al}<\infty, \ \alpha\in(0,4].
$$
Letting $r\ria\infty$ in the inequality $(\ref{eq.A}),$ we obtain
\[
\begin{split}
\lim_{r\ria\infty}\int_{B_r} H^{5+2q}\left(\frac{(-K)}{H^3}-\Lambda_1\delta^{\frac{5+2q}{3+2q}}\right)dM &\leq \Lambda_2\lim_{r\ria\infty}\dfrac{\vol(B_{2r})}{r^\al}\cdot\lim_{r\ria\infty}\dfrac{1}{r^{5+2q-\al}}=0.
\end{split}
\]
Therefore $H\equiv0,$ and this contradiction finishes the proof of the theorem.
\end{proof}

\begin{remark}\label{rem1}
{\normalfont
In the proof of Theorem A, $M$ need not even be properly immersed, since we are taking intrinsic (geodesic) balls. Since $M$ is complete, we have $M=\bigcup_{n=1}^\infty B_{r_n}$ for some sequence $r_n\ria\infty,$ and thus we can take $r\ria\infty$ in the estimate.
}
\end{remark}

\begin{remark} 
By using their Sobolev inequality $(\ref{ineq.sobolev.SSY})$, Schoen, Simon, and Yau gave a new proof of Bernstein's Theorem for dimension less than or equal to $5,$ namely, that the only entire minimal graphs $M^n$ in $\R^{n+1}, \ n\leq 5$ are hyperplanes. By using our version of Sobolev inequality $(\ref{ineq.sobolev})$, we prove the following Bernstein type result.
\end{remark}

As a corollary of Theorem A, we have the following result.
 
\begin{teoB}\label{teo333}
There are no entire graphs $M^3$ of $\R^4$ with zero scalar curvature and such that $$\dfrac{(-K)}{H^3}\geq c>0$$ everywhere, for some constant $c>0.$
\end{teoB}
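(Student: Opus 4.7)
The strategy is to derive Theorem B directly from Theorem A by verifying that every entire graph $M^3 = \{(x,u(x)) : x \in \R^3\}$ with $R=0$ and $\frac{(-K)}{H^3}\geq c>0$ is complete, stable, and has polynomial volume growth; Theorem A then gives an immediate contradiction.

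Completeness is immediate, since $M$ is parametrized globally by the proper embedding $F(x)=(x,u(x))$ and the induced metric $g_{ij}=\delta_{ij}+u_iu_j$ dominates the Euclidean metric on $\R^3$.

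For stability, I would use a Fischer--Colbrie--type argument based on a positive Jacobi field. Fix the orientation so that $H>0$; then the angle function $\Theta=\lan\eta,e_4\ran$ (with $e_4$ the vertical unit vector in $\R^4$) is positive everywhere on $M$. Because vertical translation by $e_4$ is an ambient isometry that carries zero-scalar-curvature hypersurfaces to zero-scalar-curvature hypersurfaces, the normal component of this variation satisfies the Jacobi equation $L_1\Theta-3K\Theta=0$. For any $f\in C_c^\infty(M)$, writing $f=\Theta g$ and expanding $\lan P_1(\n f),\n f\ran$, then integrating by parts via the identity $2\Theta g\,\n g=\n(\Theta g^2)-g^2\n\Theta$ together with $L_1\Theta=3K\Theta$, the cross and pure-$\Theta$ terms combine to yield
\[
\int_M\lan P_1(\n f),\n f\ran\,dM = \int_M \Theta^2\lan P_1(\n g),\n g\ran\,dM - 3\int_M Kf^2\,dM \geq -3\int_M Kf^2\,dM,
\]
since $P_1$ is positive definite. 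This is exactly the stability inequality~(\ref{ineq.stab}).

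For polynomial volume growth, the coordinate projection $\pi:\R^4\to\R^3$ onto the first three factors restricts on $M$ to a $1$-Lipschitz map with respect to the intrinsic metric (because $g\geq \pi^*g_{\R^3}$); hence $\pi(B_r^M(p_0))\subseteq B_r^{\R^3}(\pi(p_0))$, and
\[
\vol(B_r^M(p_0))\leq \int_{B_r^{\R^3}(\pi(p_0))}\sqrt{1+|\n u|^2}\,dx.
\]
The main obstacle is to bound the right-hand side by a polynomial in $r$ of degree at most~$4$: this amounts to a gradient estimate for $u$, analogous in spirit to the Bombieri--De Giorgi--Miranda estimate for minimal graphs. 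A natural route is a Moser iteration applied to the Jacobi equation $(L_1-3K)\Theta=0$, exploiting the Sobolev-type inequality of Corollary~\ref{sobolev-SSY} in place of the Schoen--Simon--Yau inequality used in the classical Bernstein argument. Once polynomial volume growth is secured, Theorem A applies and produces the desired contradiction.
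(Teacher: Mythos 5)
Your overall strategy---show the graph is stable and has polynomial volume growth, then invoke Theorem A---is exactly the route the paper takes. Your stability argument is a correct, self-contained substitute for the paper's citation: the paper simply quotes Alencar--Santos--Zhou (\cite{ASZ}, Proposition 4.1), which says that entire graphs with $R=0$ and mean curvature of constant sign are stable, after observing that $R=0$ and $K\neq 0$ force $H^2=|A|^2>0$, so $H$ cannot change sign. Your Fischer--Colbrie-type computation with the positive Jacobi field $\Theta=\lan\eta,e_4\ran$ (valid because $g^2\Theta$ has compact support and $P_1$ is positive definite when $H>0$) is essentially the proof of that cited proposition, so this part is fine, if redundant with the literature.

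The genuine gap is the polynomial volume growth. Your chain of inequalities reduces it to bounding $\int_{B_r^{\R^3}}\sqrt{1+|\n u|^2}\,dx$ by $Cr^4$, and you explicitly acknowledge that you cannot do this without a gradient estimate for $u$, offering only the hope that a Moser iteration on $(L_1-3K)\Theta=0$ would supply one. That is not a proof, and the reduction itself is lossier than necessary: replacing $\pi(B_r^M(p_0))$ by the full coordinate ball $B_r^{\R^3}$ throws away the constraint that points of the intrinsic ball are within intrinsic distance $r$ of $p_0$, which is precisely what controls the volume when $|\n u|$ is large (for $u$ with $|\n u|$ growing at infinity, $\int_{B_r^{\R^3}}\sqrt{1+|\n u|^2}\,dx$ need not be $O(r^4)$ even though $\vol(B_r^M)$ is). The paper does not prove any gradient estimate; it invokes the standard fact that intrinsic balls of entire graphs satisfy $\vol(B_r)\leq Cr^4$ and then reruns inequality (\ref{eq.A}) from the proof of Theorem A. To repair your write-up you should either cite that fact as the paper does, or prove the volume bound directly by comparing $B_r^M(p_0)$ with the extrinsic ball $\B(p_0,r)$ and using that $M$ is a graph---not by estimating $|\n u|$, which is both unproved here and stronger than what is needed.
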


\begin{proof}
Suppose there exists an entire graph $M$ satisfying the conditions of corollary. In \cite{ASZ}, Proposition $4.1,$ p. $3308,$ Alencar, Santos, and Zhou showed that entire graphs with zero scalar curvature and whose mean curvature does not change sign are stable. Since $R=0$ by hypothesis, we have $H^2=|A|^2.$ Provided $K\neq 0$ everywhere, we have $H^2=|A|^2>0$ which implies that $H$ does not change sign. Thus, the entire graph $M$ is stable. On the other hand, is well known that graphs satisfies $\vol(B_r)\leq Cr^4, C>0.$ Therefore, by using the hypothesis $\dfrac{-K}{H^3}\geq c>0,$ inequality (\ref{eq.A}), in the proof of Theorem A, p.\pageref{eq.A}, and taking $r \ria \infty $ we obtain the same contradiction.
\end{proof}


\section{Examples}
\label{sec:3}
The class of hypersurfaces treated here is non-empty, as shown in the following example. It can be found in \cite{hounie-leite}, Lemma 2.1, p. $400.$ See also \cite{AdCE}, p. $213-214$ and \cite{dCE}, p. $161.$
\begin{example}
{\normalfont
Let $M^3\hookrightarrow\R^4$ the rotational hypersurface parametrized by
$$
X(t,\theta,\vp)=(f(t)\sen\theta\cos\vp,f(t)\sen\theta\sen\vp,f(t)\cos\theta,t),
$$
where $f(t)=\dfrac{t^2}{4m}+m$ and $m$ is a non-negative constant. The principal curvatures are
$$
\lambda_1=\lambda_2=\dfrac{m^{1/2}}{f^{3/2}}, \ \lambda_3=-\dfrac{1}{2}\dfrac{m^{1/2}}{f^{3/2}}.
$$ 
Then $R=0$ and $\dfrac{-K}{H^3}=\dfrac{4}{27}$ everywhere. Since $M^3$ is a rotational hypersurface and its profile curve is quadratic, it has polynomial volume growth. Then by Theorem A the immersion is unstable. 
}
\end{example}

This example appears in the Theory of Relativity as the embedding of the space-like Schwarzschild manifold of mass $m/2>0,$ see Introduction of \cite{bray}, for details.


The following class of hypersurfaces are well known, see \cite{AdCE}, p. $214,$ and they are the classical examples of stable hypersurfaces with zero scalar curvature. This class show us that some condition over nullity of Gauss-Kronecker curvature are needed.

\begin{example}\label{ex2}
{\normalfont
Let $M^3\subset\R^4$ be the cylinder parametrized by
$$
{\bf x}(u,v,t)=(u,v,\al(t),\be(t)), u,v,t\in\R,
$$
where $c(t):=(\al(t),\be(t))$ is a parametrized curve with positive curvature $k(t)$
at every point. In this case, principal curvatures are
$$
\lambda_1=0, \lambda_2=0, \ \mbox{and} \ \lambda_3=k(t).
$$
Thus $R=0, \ H>0$ and $K=0$ everywhere. Then, $M^3$ is stable, see \cite{AdCE}.

Observe that if $c(t)=(t,f(t)),$ the cylinder $M$ is the graph of the smooth function $F:\R^3\ria\R$ given by $F(u,v,t)=f(t)$. In particular, taking $f(t)=t^2$ or $f(t)=\sqrt{1+t^2}$ we obtain an entire graph with polynomial volume growth, $R=0, H>0$ and $K=0$ everywhere.
}
\end{example}


\section{Non-embedded Tubes}
\label{sec:4}
Let $x:M^3\ria\R^4$ be an isometric immersion. Following Nelli and Soret, see \cite{nelli}, we define the \emph{tube of radius} $h$ \emph{around} $M$ the set
$$
T(M,h) = \{x\in\R^4;\ \exists\ p\in M, x=p+t\eta, t\leq h(p)\}, 
$$
where $\eta$ is the normal vector of the second fundamental form of $x,$ and $h:M\ria\R$ is an everywhere non-zero smooth function. If $|A|\neq0$ everywhere, we define the \emph{subfocal tube} the set
$$
T\left(M,\dfrac{\epsilon}{|A|}\right), \  0<\epsilon\leq1.
$$ 
Denote by $T(r,h)$ the tube of radius $h$ around $B_r\subset M,$ i.e., considering $M=B_r$ in the above definition, and let 
$$
V(r,h)=\int_{T(r,h)}dT,
$$ 
where $dT$ denotes the volume element of the tube. 
If $R=0,$ and choosing an orientation such that $H>0,$ we have $H=|A|.$ Under the conditions of Proposition \ref{prop.sobolev2}, and assuming that $\dfrac{(-K)}{H^3}\geq c>0,$ then there exists a constant $C(q)$ depending only on $0<q<\sqrt{\frac{1}{1+2c_0^2}}$ such that
\begin{equation}\label{ineq3}
\int_{B_r}|A|^{5+2q}\psi^{5+2q}dM\leq C(q)\int_{B_r}|\n\psi|^{5+2q}dM.
\end{equation}
 Choosing the same function with compact support used in the proof of Theorem A (see $(\ref{test-function}),$ p. $\pageref{test-function}$), we obtain
$$
\int_{B_r}|A|^{5+2q}dM \leq C(q)\dfrac{\vol(B_r)}{r^{5+2q}}.
$$
The following lemma is essentially the same Lemma $1$ of \cite{nelli}, p. $496,$ and the proof will be omitted here.
\begin{lemma}\label{lemma_nelli}
Let $M^3$ be a complete, stable hypersurface of $\R^4$ satisfying $R=0$ and $\dfrac{(-K)}{H^3}\geq c>0$ everywhere.
\begin{enumerate}
\item[(a)] For $r>0$ sufficiently large, there exists a constant $\alpha(q),$ depending only on $0<q<\sqrt{\frac{1}{1+2c_0^2}}$ such that
\begin{equation}\label{ineq5}
\vol(B_r)>\alpha(q)r^{5+2q}.
\end{equation}

\item[(b)] For each $\beta>1,$ $0<q<\sqrt{\frac{1}{1+2c_0^2}},$ and $r>0$ satisfying inequality $(\ref{ineq5})$ above, there exists a sufficiently large $\tilde{r}>r$ such that 
$$
\vol(B_{\tilde{r}}) - \vol(B_{\beta^{-1}\tilde{r}})>\alpha(q)r^{5+q}.
$$
\end{enumerate}
\end{lemma}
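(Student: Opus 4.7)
The central tool is the Sobolev-type inequality (\ref{ineq3}):
$\int_M |A|^{5+2q}\psi^{5+2q}\,dM \le C(q)\int_M|\n\psi|^{5+2q}\,dM$
for compactly supported smooth $\psi$ and $0<q<\sqrt{1/(1+2c_0^2)}$. This is already derived from Proposition \ref{prop.sobolev2} by fixing $\delta$ so small that $c-\Lambda_1\delta^{(5+2q)/(3+2q)}\ge c/2$ and using $H=|A|$. With that inequality in hand, both parts reduce to volume-comparison arguments.

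For part (a), I would plug the Lipschitz cut-off $\psi$ from (\ref{test-function}) into (\ref{ineq3}). Since $|\n\psi|\le 1/r$ supported in $B_{2r}\setminus B_r$, the right-hand side is at most $C(q)\,r^{-(5+2q)}\vol(B_{2r})$, while the left-hand side dominates $\int_{B_r}|A|^{5+2q}\,dM$. The hypothesis $(-K)/H^3\ge c>0$, together with the chosen orientation, forces $H>0$, hence $|A|=H>0$, everywhere; so for any fixed reference radius $r_0>0$, the quantity $c_1:=\int_{B_{r_0}}|A|^{5+2q}\,dM$ is strictly positive. For $r\ge r_0$, replacing the left-hand side by $c_1$ and substituting $s=2r$ yields $\vol(B_s)\ge \alpha(q)\,s^{5+2q}$ for $s\ge 2r_0$, with $\alpha(q):=c_1/(C(q)\,2^{5+2q})$.

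For part (b), I plan to argue by contradiction along a geometric progression of radii. Fix $r$ as in (a) and suppose that
$\vol(B_{\tilde r})-\vol(B_{\beta^{-1}\tilde r})\le \alpha(q)\,r^{5+q}$
for \emph{every} $\tilde r>r$. Applying this with $\tilde r=\beta^k r$ for $k=1,\dots,N$ and telescoping gives $\vol(B_{\beta^N r})-\vol(B_r)\le N\,\alpha(q)\,r^{5+q}$. On the other hand, part (a) applied at the large radius $\beta^N r$ supplies the lower bound $\vol(B_{\beta^N r})\ge \alpha(q)\,\beta^{N(5+2q)}\,r^{5+2q}$. Combining the two and dividing through by $\alpha(q)\,r^{5+q}$ yields $\beta^{N(5+2q)}\,r^q \le N + \vol(B_r)/(\alpha(q)\,r^{5+q})$, whose left side grows exponentially in $N$ while the right side grows only linearly. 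This is a contradiction for $N$ large, so some $\tilde r>r$ (indeed, infinitely many along $\beta^k r$) must satisfy the desired strict inequality.

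The main technical work is absorbed into the Sobolev-type inequality (\ref{ineq3}), which was the content of Proposition \ref{prop.sobolev2}; once that is granted, (a) is essentially the standard Michael--Simon-style volume lower bound and (b) is a clean telescoping/exponential-vs.-linear contradiction. The only bookkeeping needed is the positivity of $c_1$ (immediate from $|A|=H>0$ everywhere) and the admissible range of $q$, both of which are inherited directly from the standing hypotheses.
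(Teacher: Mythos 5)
Your proof is correct and follows exactly the route the paper intends: the paper omits the argument and defers to Lemma~1 of Nelli--Soret, whose proof is precisely your combination of the Sobolev-type inequality (\ref{ineq3}) with the linear cut-off to get the volume lower bound in (a), and a telescoping sum over the radii $\beta^k r$ pitted against the exponential growth from (a) to get (b). The only cosmetic point is that your part (a) yields $\vol(B_s)\geq\alpha(q)s^{5+2q}$ rather than the strict inequality, which is repaired by shrinking $\alpha(q)$ slightly.
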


The next result is a vanishing scalar curvature version of Theorem $1,$ p. $499$ of \cite{nelli}. 

\begin{teoC}
Let $M^3$ be a stable complete hypersurface of $\R^4$ with vanishing scalar curvature. Suppose that the second fundamental form of the immersion is bounded and there exists a constant $c>0$ such that $\dfrac{(-K)}{H^3}\geq c>0$ everywhere. Then, for constants $0<b_1\leq1, \ b_2>0,$ and for any smooth function $h:M\ria\R$ satisfying
\begin{equation}\label{hyp.teoC}
h(p)\geq\inf\left\{\dfrac{b_1}{|A(p)|},b_2\rho(p)^\delta\right\},\ \delta>0,
\end{equation}
the tube $T(M,h)$ is not embedded. Here, $\rho(p)$ denotes the intrinsic distance in $M$ to a fixed point $p_0\in M.$
\end{teoC}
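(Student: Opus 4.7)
The plan is to argue by contradiction along the lines of Nelli and Soret \cite{nelli}, replacing their minimality by the vanishing scalar curvature condition. Assume that $T(M,h)$ is embedded. Since $T(M,h_1) \subset T(M,h_2)$ whenever $h_1 \leq h_2$, a self-intersection of the smaller tube persists in the larger one, so it suffices to work with the pointwise minimum $h(p) = \min\{b_1/|A(p)|,\, b_2\rho(p)^\delta\}$. Embeddedness makes the normal exponential map $F(p,t) = p + t\eta(p)$ a diffeomorphism onto its image on $\{(p,t):p\in B_r,\, 0\leq t\leq h(p)\}$, and the area formula expresses the ambient 4-volume as
\[V(r,h) \;=\; \int_{B_r}\int_0^{h(p)} \det\bigl(I - tA(p)\bigr)\, dt\, dM.\]
Because $R = 0$, the Jacobian simplifies to $\det(I - tA) = 1 - tH + t^3(-K)$. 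The restriction $t \leq h(p) \leq b_1/|A(p)|$ with $0 < b_1 \leq 1$ forces $|t\lambda_i| \leq b_1 \leq 1$ for each principal curvature $\lambda_i$, so every factor $(1 - t\lambda_i)$ is non-negative, and shrinking $b_1$ if necessary yields a uniform lower bound $\det(I - tA) \geq (1-b_1)^3$.

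Next I would turn the Jacobian estimate into a lower bound on $V(r,h)$. Using the hypothesis $|A|\leq\Lambda$, set $r_0 := (b_1/(b_2\Lambda))^{1/\delta}$: for every $p$ with $\rho(p) \geq r_0$ both $b_1/|A(p)|$ and $b_2\rho(p)^\delta$ are at least $b_1/\Lambda$, so $h(p) \geq b_1/\Lambda$. Combined with the Jacobian bound this gives, for $r$ sufficiently large,
\[V(r,h) \;\geq\; (1-b_1)^3\,\frac{b_1}{\Lambda}\,\vol(B_r\setminus B_{r_0}) \;\geq\; C_1\,\vol(B_r),\]
and Lemma \ref{lemma_nelli}(a) then supplies some $q \in \bigl(0, \sqrt{1/(1+2c_0^2)}\bigr)$ with $V(r,h) \geq C_2\, r^{5+2q}$ for all $r$ sufficiently large.

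For the upper bound, embeddedness of the tube together with the elementary inequality $d_{\R^4}(p_0,p) \leq \rho(p)$ gives $T(r,h) \subset B^{\R^4}(p_0,\, r + \max_{B_r} h)$, and since $h(p) \leq b_2\rho(p)^\delta \leq b_2 r^\delta$ on $B_r$, we obtain $V(r,h) \leq \omega_4(r + b_2 r^\delta)^4$. Comparing exponents, when $\delta \leq 1$ the upper bound is $O(r^4)$ while the lower bound grows like $r^{5+2q}$ with $5 + 2q > 4$, producing the contradiction as $r \to \infty$. The main obstacle is the case $\delta > 1$, where $4\delta$ can exceed $5 + 2q$; to handle it I would localize the whole comparison on intrinsic annuli $B_{\tilde r}\setminus B_{\tilde r/\beta}$, use Lemma \ref{lemma_nelli}(b) to obtain an annular intrinsic volume of order $r^{5+q}$ for a suitable $\tilde r = \tilde r(r)$, and refine the ambient containment so that the annular tube volume is controlled by $r$ rather than by $\tilde r$. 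Making this annular refinement rigorous — in particular, controlling how far extrinsically the tube over a thin intrinsic annulus can spread and tying $\tilde r$ polynomially to $r$ — is the technical core and the most delicate step of the proof.
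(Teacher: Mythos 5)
Your overall strategy coincides with the paper's: express the tube volume through the Jacobian $\det(I-tA)=1-tH-t^{3}K$, bound it below by a multiple of $\vol(B_r)$, invoke Lemma \ref{lemma_nelli} to get growth of order $r^{5+2q}$, and contradict the $O(r^{4})$ bound coming from the containment of an embedded tube in a Euclidean ball. Your lower bound is in fact cleaner than the paper's: the uniform estimate $\det(I-tA)=\prod_i(1-t\lambda_i)\geq(1-b_1)^{3}$ for $t|A|\leq b_1<1$ replaces the paper's term-by-term estimates using $K\leq\frac{1}{3\sqrt{3}}|A|^{3}$ together with the splitting of $B_r$ into the regions where each term of the minimum is attained.

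However, the proof is not complete: the case $\delta>1$ is a genuine gap, and you acknowledge it. The annular refinement you sketch is not the right tool --- there is no good control on how far the tube over a thin intrinsic annulus spreads extrinsically once $h$ is allowed to grow like $\rho^{\delta}$, so that route would stall. The simple fix, which is exactly what the paper does and which is where the hypothesis that $|A|$ is bounded earns its keep, is to run the upper-bound step on a constant-radius subtube. Set $a:=\inf_M(1/|A|)>0$. Since $h(p)\geq\min\{b_1/|A(p)|,\,b_2\rho(p)^{\delta}\}\geq b_1a$ outside a compact set, the tube $T(r,b_1a)$ is contained in $T(r,h)$ and is embedded whenever the latter is; your Jacobian estimate applies verbatim to it because $b_1a\leq b_1/|A|$ everywhere, giving $V(r,b_1a)\geq(1-b_1)^{3}\,b_1a\,\vol(B_r)\geq C\,r^{5+2q}$ by Lemma \ref{lemma_nelli}(a); and $T(r,b_1a)\subset\B(p_0,r+2b_1a)$ forces $V(r,b_1a)\leq\om_4(r+2b_1a)^{4}=O(r^{4})$. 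Since $5+2q>4$, this yields the contradiction for every $\delta>0$ at once, and it also makes Lemma \ref{lemma_nelli}(b) unnecessary in your version of the argument.
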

\begin{proof}
In \cite{nelli}, Nelli and Soret showed that

$$
V(r,h)=\int_{B_r} h(p)dM -\dfrac{1}{2}\int_{B_r}h(p)^2H(p)dM - \dfrac{1}{4}\int_{B_r}h(p)^4K(p)dM.
$$
By using the classical inequality between geometric and quadratic means, one finds that
$$
\begin{array}{rcl}
K&=&\lambda_1\lambda_2\lambda_3\leq |\lambda_1||\lambda_2||\lambda_3|\\
&&\\
&\leq&\left(\dfrac{\lambda_1^2+\lambda_2^2+\lambda_3^2}{3}\right)^{3/2}\\
&&\\
&=&\dfrac{1}{3\sqrt{3}}|A|^3,
\end{array}
$$
i.e.,
\begin{equation}\label{ineq7}
K(p)\leq \dfrac{1}{3\sqrt{3}}|A(p)|^3.
\end{equation}
Let $B_r^+$ the set where $\dfrac{b_1}{|A(p)|}$ is the infimum and $B_r^{-}=B_r\backslash B_r^+.$ Then
$$
\begin{array}{rcl}
V(r,h)&\geq&\displaystyle{ b_1\int_{B_r^+}\dfrac{1}{|A|}dM - \dfrac{b_1^2}{2}\int_{B_r^+}\dfrac{1}{|A|^2}HdM - \dfrac{b_1^4}{4}\int_{B_r^+}\dfrac{1}{|A|^4}KdM}\\
&&\\
&&\displaystyle{+b_2\int_{B_r^-}\rho^\delta dM - \dfrac{b_2^2}{2}\int_{B_r^-}\rho^{2\delta}HdM - \dfrac{b^4_2}{4}\int_{B_r^-}\rho^{4\delta} K dM.}\\
\end{array}
$$
Since $H=|A|,$ we have
$$
\begin{array}{rcl}
V(r,h)&\geq&\displaystyle{\left(b_1 - \dfrac{b_1^2}{2} - \dfrac{b_1^4}{12\sqrt{3}}\right)\int_{B_r^+}\dfrac{1}{|A|}dM}\\
&&\\
&&\displaystyle{+ b_2\int_{B_r^-}\rho^\delta dM - \dfrac{b_2^2}{2}\int_{B_r^-}\rho^{2\delta}HdM - \dfrac{b_2^4}{4}\int_{B_r^-}\rho^{4\delta}KdM}.
\end{array}
$$
Let us estimate the integrals over $B_r^-.$ By using inequality $(\ref{ineq7})$ above, we get
$$
- K\geq - \dfrac{1}{3\sqrt{3}}|A|^3\geq - \dfrac{b_1}{b_2}\dfrac{1}{3\sqrt{3}}\rho^{-3\delta}
$$
and
$$
-H=-|A|\geq\dfrac{b_1}{b_2}\rho^{-\delta}.
$$
By hypothesis, $|A|$ is bounded, then there exists $\displaystyle{a:=\inf_M \dfrac{1}{|A|}.}$ Therefore
$$
\begin{array}{rcl}
V(r,h)&\geq&\displaystyle{\left(b_1-\dfrac{b_1^2}{2} - \dfrac{b_1^4}{12\sqrt{3}}\right)\int_{B_r^+}\dfrac{1}{|A|}dM + \left(b_2-\dfrac{b_2b_1}{2} - \dfrac{b_2^3b_1}{12\sqrt{3}}\right)\int_{B_r^-}\rho^\delta dM}\\
&&\\
&\geq&\displaystyle{a\left(b_1-\dfrac{b_1^2}{2} - \dfrac{b_1^4}{12\sqrt{3}}\right)\vol(B_r^+) + \left(b_2-\dfrac{b_2b_1}{2} - \dfrac{b_2^3b_1}{12\sqrt{3}}\right)\int_{B_r^-}\rho^\delta dM}.\\
\end{array}
$$
On the other hand, for $r$ sufficiently large,
$$
\begin{array}{rcl}
\displaystyle{\int_{B_r^-}\rho^\delta dM} &=& \displaystyle{\int_{B_r^-\backslash B_{\beta^{-1}r}^-}\rho^\delta dM + \int_{B_{\beta^{-1}r}^-}\rho^\delta dM  \geq\int_{B_r^-\backslash B_{\beta^{-1}r}^-}\rho^\delta dM}\\
&&\\
&\geq&\displaystyle{\left(\dfrac{r}{\beta}\right)^\delta[\vol(B_r^-) - \vol(B_{\beta^{-1}r}^-)]}\\
&&\\
&\geq&\displaystyle{[\vol(B_r^-) - \vol(B_{\beta^{-1}r}^-)].}\\ 
\end{array}
$$
Then
$$
\begin{array}{rcl}
V(r,h)&\geq&a\left(b_1 - \dfrac{b_1^2}{2} -\dfrac{b_1^4}{12\sqrt{3}}\right)\left(\vol(B_r^+)-\vol(B_{\beta^{-1}r}^+)+\vol(B_{\beta^{-1}r}^+)\right)\\
&&\\
&&+\left(b_2-\dfrac{b_2^2b_1}{2} -\dfrac{b_2^3b_1}{12\sqrt{3}}\right)\left(\vol(B_r^-) -\vol(B_{\beta^{-1}r}^-)\right)\\
&&\\
&\geq&C[\vol(B_r) - \vol(B_{\beta^{-1}r})].\\
\end{array}
$$
By using Lemma \ref{lemma_nelli}, item $(b),$ there exists $\tilde{r}>r$ such that
\begin{equation}\label{ineq9}
V(\tilde{r},h)\geq C\tilde{r}^{5+q}.
\end{equation}
%
The Euclidean distance is less than or equal to the intrinsic distance. It implies
$$
B_r(p)\subset \B(p,r),
$$
where $B_r(p)\equiv B_r$ and $\B(p,r)$ denotes the intrinsic and the Euclidean ball of center $p$ and radius $r.$ By using $(\ref{hyp.teoC})$, we have
$$
h(q)\geq\min\left\{\dfrac{b_1}{|A|},b_2\rho(q)^\delta\right\}\geq\min\left\{\inf_M\dfrac{b_1}{|A|},b_2\rho(q)^\delta\right\}=\inf_M\dfrac{b_1}{|A|}=b_1a,
$$
for $0<b_1\leq1$ and $\rho$ sufficiently large, then
$$
T\left(r,b_1a\right)\subset T(r,h).
$$
Suppose, by contradiction, that $T(r,b_1a)$ is embedded. Since
$$
T(r,b_1a)\subset \B(p,r+2b_1a),
$$
then its volume $V(r,b_1a)$ satisfies
$$
V(r,b_1a)\leq \vol(\B(p,r+2b_1a))=\om_4(r+2b_1a)^4,
$$ 
where $\om_4$ is the volume of $\B(p,1).$ Let us consider two different cases. First, if $M$ is not contained in any ball, above inequality is a contradiction with $(\ref{ineq9})$ for $r$ sufficiently large. Therefore, $T(r,b_1a),$ and thus $T(r,h),$ is not embedded for $r$ sufficiently large. In the second case, if $M$ is contained in some ball, then $T(M,h)$ has finite volume (since $T(M,h)$ is embedded) and it is also a contradiction with $(\ref{ineq9}).$
\end{proof}

\section{Appendix}
\label{Appendix}

Let us prove the following fact established in the Introduction:

\emph{Let $x:M^3\ria\R^4$ be an isometric immersion with zero scalar curvature. If $H$ and $K$ denotes the mean curvature and Gauss-Kronecker curvature, respectively, then
$$
0\leq \dfrac{-K}{H^3}\leq\dfrac{4}{27}\ \mbox{everywhere on} \ M.
$$
}

\begin{figure}[ht]
\centering
\includegraphics[scale=0.35]{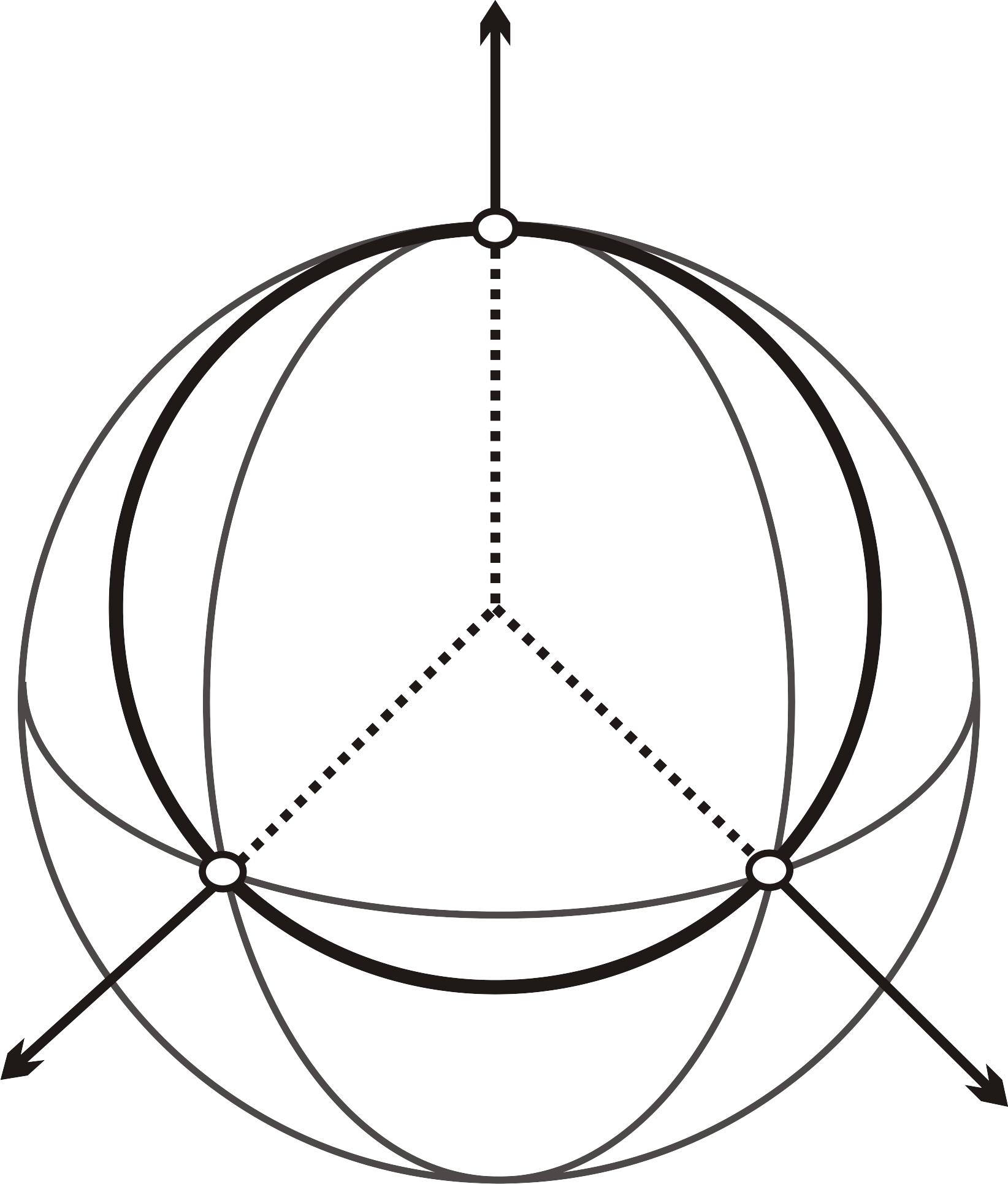}
\label{fig:1}
\caption{Representation of the domain $N_\om$ of $\dfrac{K}{H^3}$ over $\s^2,$ considering this function as an algebraic function of the eigenvalues. This domain is the intersection of one of the plane $\lambda_1+\lambda_2+\lambda_3=1$ with $\s^2.$ The hypothesis cuts off only three small neighbourhoods around the coordinate axis.}
\end{figure}

In fact, let $(\lambda_1,\lambda_2,\lambda_3)=t\omega$ where $\omega\in\s^{2}.$ By using $(\ref{eq.curv}),$ we can see that $R,H$ and $K$ are homogeneous polynomials. It implies $H(t\om)=tH(\om), \ R(t\om)=t^2R(\om),\ K(t\om)=t^3K(\om)$ and hence
$$
\dfrac{K}{H^3}(t\om)=\dfrac{K}{H^3}(\om).
$$
Then the behavior of $\frac{K}{H^3}$ depends only of its values on the sphere $\s^{2}.$ Since $N:=\{(\lambda_1,\lambda_2,\lambda_3)\in\R^3; R=\lambda_1\lambda_2+\lambda_1\lambda_3+\lambda_2\lambda_3=0\}$ is closed and $\s^{2}$ is compact, we obtain that $N_\om=N\cap\s^{2}$ is compact, see figure \ref{fig:1}. Then, $\frac{K}{H^3}:N_\om\ria\R$ is a continuous function with compact domain. The claim then follow from the Weierstrass maxima and minima theorem. Upper bound $\frac{4}{27}$ can be found by using Lagrange multipliers method.



\begin{minipage}[b]{0.98\linewidth}
\begin{flushright}
\def\arraystretch{1.2}
\begin{tabular}{l}
Greg\' orio Silva Neto\\
Universidade Federal de Alagoas,\\ 
Instituto de Matemática,\\ 
57072-900, Maceió, Alagoas, Brazil.\\
gregorio@im.ufal.br\\
\end{tabular}
\end{flushright}
\end{minipage} \hfill

\end{document}